\def\BState{\State\hskip-\ALG@thistlm}
\theoremstyle{plain}
\newtheorem{theorem}{Theorem}
\newtheorem{lemma}{Lemma}
\newtheorem{corollary}{Corollary}
\newtheorem{proposition}{Proposition}
\theoremstyle{definition}
\newtheorem{definition}{Definition}
\newtheorem{remark}{Remark}
\newtheorem{conjecture}{Conjecture}
\begin{document}

\title{\large \bf On matchable subsets in abelian groups and their linear analogues
\thanks{\textit{ Key Words}: linear acyclic matching property, linear local matching property, $m$-intersection property, matching matrices, weak acyclic matching property.}
\thanks {{ \it 2010 Mathematics Subject Classification.} Primary: 05D15; Secondary: 11B75, 20D60, 20F99, 12F99.}
}

\author{{\normalsize{\sc Mohsen Aliabadi}$^{a,1}$ and {\sc Mano Vikash Janardhanan}$^{a,2}$}\\
{\footnotesize{\it $^a$Department of Mathematics, Statistics, and Computer Science, University of Illinois at Chicago,   }}\\
{\footnotesize{\it 851 S. Morgan St, Chicago, IL 60607, USA}}\\
{\footnotesize{$^1$E-mail address: $\mathsf{maliab2@uic.edu}$}}\\
{\footnotesize{$^2$E-mail address: $\mathsf{mjanar2@uic.edu}$}}\\
}
\date{}

\maketitle
\begin{abstract}
In this paper, we introduce the notions of matching matrices in groups and vector spaces, which lead to some necessary conditions for existence of acyclic matching in abelian groups and its linear analogue. We also study the linear local matching property in field extensions to find a dimension criterion for linear locally matchable bases. Moreover, we define the weakly locally matchable subspaces and we investigate their relations with matchable subspaces. We provide an upper bound for the \linebreak dimension of  primitive subspaces in a separable  field extension. We  employ  MATLAB coding to investigate the  existence of acyclic matchings in finite cyclic groups. Finally,   a possible research  problem on matchings in $n$-groups is presented. Our tools in this paper mix combinatorics and linear algebra.
\end{abstract}

\tableofcontents

\section{Introduction}
Let $B$ be a finite subset of the group $\mathbb{Z}^n$ which does not contain the neutral element. For any subset $A$ in $\mathbb{Z}^n$ with the same cardinality as $B$, a {\it matching} from $A$ to $B$ is defined to be a bijection $\varphi:A\to B$ such that for any $a\in A$ we have $a+\varphi(a)\not\in A$. For any matching $\varphi$ as above, the associated multiplicity function $m_\varphi:\mathbb{Z}^n\to \mathbb{Z}_{\geq0}$ is defined via the rule:
\begin{align}
\forall x\in \mathbb{Z}^n,\quad m_\varphi(x)=\#\{a\in A:\, a+\varphi(a)=x\}.
\end{align}
A matching $\varphi:A\to B$ is called {\it acyclic} if for any matching $g:A\to B$, $m_f=m_g$ implies $f=g$. The notion of matchings in groups was introduced by Fan and Losonczy in \cite{*} in order to generalize a geometric property of lattices in Euclidean space.  Fixing finite subsets $A$ and $B$ in $\mathbb{Z}^n$ with the same cardinality such that $0\not\in B$, the existence of an acyclic matching from $A$ to $B$ is proven \cite{4}. The motivation to study acyclic matchings is their relations with an old problem of Wakeford concerning canonical forms for symmetric tensors \cite{12}. In other words, acyclic matchings were used to prove that any small enough fixed set of monomials can be removed from a generic homogeneous form after a suitable linear change of variables. The notion of matchings was generalized and explored in the context of arbitrary groups \cite{6}. Later, the notion of the local matching property was introduced in 2018 Aliabadi-Janardhanan to study the matching property under weaker conditions \cite{3}. The main purpose of this paper is to proceed to study the ``main problem concerning the relation between matchable bases and locally matchable bases".\\
It is shown in \cite{9} that any torsion-free abelian group possesses the acyclic matching property. In Section 2 we introduce the matching matrices to study acyclic matchings in any abelian groups, not torsion-free necessarily. We also define the weak acyclic matching property to characterize finite cyclic groups in terms of acyclic matching property in usual sense.  Then we provide the linear analogue of matching matrices  to study the linear acyclic matching property in vector spaces.  Section 3 presents  a dimension criterion to study the linear local matching property in subspaces in a field extension. We would like to mention that although the main problem concerning the relation between matchable bases and locally matchable bases  is still unsolved (\cite[Remak 5.7]{3}), our dimension criterion should be useful to   characterize it. We introduce the concept of weakly locally matchable subspaces which is very similar to locally matchable subspace in its usual sense. We establish that matchable subspaces are weakly locally matchable as well.  In Section 4, we provide a formula to estimate the dimension of primitive subspaces in a separable field extension motivated by matchable bases problems in primitive vector subspaces \cite[Theorem 4.2]{3}.  In Section 5, we give MATLAB coding of our conjectures in Section 2 concerning the acyclic matching property and the weak acyclic matching property in finite cyclic groups and discuss simulation results.  Finally, in Section 6  a possible research problem  on matchings in $n$-groups is presented.

\section{Acyclicity}
In this section, we introduce  matching matrices in abelian groups and prove that the matching matrix of an acyclic  matching  is invertible. Then we introduce the weak  acyclic matching property in abelian groups and provide an open problem similar to those in \cite{2} regarding characterizing prime numbers in terms of acyclicity. Next, we define the linear analogue of matching matrices in field extensions and we conjecture the linear version of  matching matrices in the field setting. We  provide an open problem concerning the existence of the linear acyclic matching property in field extensions motivated by its analogue to \cite[Proposition 2.2 and Proposition 2.3]{2}. We finally investigate matchable subspaces in field extensions that contain proper finite dimensional intermediate subfields.\\
Following Losonczy in \cite{9}, we say that a group $G$ possesses the {\it matching property} if for every pair $A$ and $B$ of finite subsets of $G$ with   $\# A=\#B$ and $0\not\in B$ there is at least one  matching from $A$ to $B$. Also, $G$ possesses the {\it acyclic matching property} if for every pair $A$ and $B$ of finite subsets of $G$ with   $\# A=\#B$ and $0\not\in B$ there is at least one  acyclic matching from $A$ to $B$. The main result concerning the acyclic matching property is proven by Losonczy in \cite{9}, where it is shown that any torsion-free abelian group has the acyclic matching property.\\
Let $G$ be an abelian group. For each $g\in G$ we associate a variable $x_g$. Consider now the commutative ring of polynomials with integer coefficients in all possible variables. Call it $D(G)$.
Let $A=\{a_1,\ldots,a_n\}$ and $B=\{b_1,\ldots,b_n\}$ be two subsets of $G$. Define the $n\times n$ matrix $M_{A,B}=[m_{ij}]_{n\times n}$ as follows:
\[m_{ij}=\begin{cases}x_{a_i+b_j}& \mathrm{if}\;\; a_i+b_j\not\in A;\\ 0& \mathrm{otherwise}.\end{cases}\]
\begin{definition}\label{d2.1}
The matrix $M_{A,B}$ is called the matching matrix from $A$ to $B$.
\end{definition}
Note that $M_{A,B}$ is defined over the domain $D(G)$. More precisely, let $S=A+B$. Then $D(S)$ is the finitely generated  subring in the variables in $S$.\\
In the following observation we show that the necessary condition for the existence of an acyclic matching from $A$ to $B$ is that $M_{A,B}$ is invertible. Note that here we mimick Losonczy's proof \cite{*} concerning removable subsets of $T(q,p)$, where $T(q,p)$ denotes the set of all $q$-tuples $I=(i_1,\ldots,i_q)$ of nonnegative integers satisfying \[i_1+i_2+\cdots+i_q=p.\]
\begin{proposition}\label{t2.2}
Let $A$ and $B$ be two finite subsets of the abelian group  $G$. If there is an acyclic matching from $A$ to $B$,  then $M_{A,B}$ is invertible.
\end{proposition}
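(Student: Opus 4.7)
The natural plan is to expand $\det(M_{A,B})$ via the Leibniz formula and show that the term corresponding to the given acyclic matching cannot be cancelled by any other. Writing $\varphi_\sigma$ for the bijection $a_i \mapsto b_{\sigma(i)}$ associated to a permutation $\sigma \in S_n$, one has
\[
\det(M_{A,B}) = \sum_{\sigma \in S_n} \operatorname{sgn}(\sigma) \prod_{i=1}^{n} m_{i,\sigma(i)}.
\]
By the defining conditions on $M_{A,B}$, the product $\prod_i m_{i,\sigma(i)}$ vanishes unless $a_i + b_{\sigma(i)} \notin A$ for every $i$, which is precisely the requirement that $\varphi_\sigma$ be a matching from $A$ to $B$. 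Thus the sum reduces to one term per matching, and the term attached to $\varphi_\sigma$ is $\operatorname{sgn}(\sigma) \prod_i x_{a_i+b_{\sigma(i)}}$.

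The next step is to observe that the monomial $\prod_i x_{a_i+b_{\sigma(i)}}$ is nothing but a compact encoding of the multiplicity function $m_{\varphi_\sigma}$: the exponent of $x_g$ in this monomial equals $\#\{i : a_i + b_{\sigma(i)} = g\} = m_{\varphi_\sigma}(g)$. Consequently, two such monomials, arising from matchings $\varphi_\sigma$ and $\varphi_\tau$, coincide in $D(S)$ if and only if $m_{\varphi_\sigma} = m_{\varphi_\tau}$.

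Now let $\varphi = \varphi_{\sigma_0}$ be the hypothesized acyclic matching. If the monomial attached to $\sigma_0$ were to equal the monomial attached to some other $\sigma \ne \sigma_0$ corresponding to a matching, then $m_{\varphi_{\sigma_0}} = m_{\varphi_\sigma}$, contradicting acyclicity of $\varphi_{\sigma_0}$. Hence the monomial $\prod_i x_{a_i+b_{\sigma_0(i)}}$ appears exactly once among the nonzero terms of the expansion and survives with coefficient $\operatorname{sgn}(\sigma_0) = \pm 1$; in particular $\det(M_{A,B}) \ne 0$ in the polynomial ring $D(S)$, so $M_{A,B}$ is invertible over the field of fractions, which is the intended meaning here. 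No genuinely hard step appears; the only point requiring some care is the translation between matchings and nonzero Leibniz terms, and the identification of the monomial with the multiplicity function, so that acyclicity can be applied cleanly.
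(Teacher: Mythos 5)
Your proposal is correct and follows essentially the same route as the paper: expand the determinant, identify nonzero Leibniz terms with matchings, observe that the monomial attached to a matching records its multiplicity function, and use acyclicity to conclude that the monomial of the given matching occurs exactly once and hence survives with coefficient $\pm 1$. If anything, your write-up makes explicit a point the paper leaves implicit, namely that equality of monomials in $D(S)$ is equivalent to equality of multiplicity functions, which is precisely where acyclicity enters.
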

\begin{proof}
The determinant of $M_{A,B}$ equals, up to sign,
\begin{align}
\sum_\sigma\left((-1)^\sigma\cdot\prod_{\substack{1\leq i\leq n\\ b_j=\sigma(a_i)\\\sigma\in\mathcal{S}_n}}m_{i,\sigma(j)}\right),
\end{align}
where $\mathcal{S}_n$ stands for the symmetric group of degree $n$. There is a one to one correspondence between matchings from $A$ to $B$ and the nonzero summands in the above expansion. Assume that $\varphi:A\to B$ is an acyclic matching. The summand corresponding to $\sigma=\mathbf{id}$ is 
\begin{align}
\prod_{\substack{1\leq i\leq n\\ b_j=\varphi(a_i)}}m_{ij}&=\prod_{\substack{1\leq i\leq n\\ b_j=\varphi(a_i)}}x_{a_i+b_j}=\prod_{1\leq i\leq n}x_{a_i+\varphi(a_i)}\nonumber\\
&=\prod_{x\in G}x^{m_\varphi(x)}.
\end{align}
Since $\varphi$ is an acyclic matching, this term is not canceled in the above expansion. Then the determinant of $M_{A,B}$ is nonzero and so $M_{A,B}$ is invertible.
\end{proof}
It is shown in \cite{9} that an abelian group satisfies the matching property if and only if it is torsion-free or cyclic of prime order. Later, it is proven in \cite{6} that this is the case for non-abelian groups as well. But similar classification for acyclic matching property has not been found yet. It is proven in \cite{9} that abelian torsion-free groups satisfy the acyclic matching property. But characterizing  the acyclic matching property in finite groups of prime order is still an unsolved problem. It is shown in \cite{2} that there are infinitely many prime $p$ for which $\mathbb{Z}/p\mathbb{Z}$ does not satisfy the acyclic matching property.  
In Section 5, we will employ a MATLAB code to numerically check  for which values of $p$, $\mathbb{Z}/p\mathbb{Z}$ have  the acyclic matching property.
 Its results show that for all $5<p<19$, $\mathbb{Z}/p\mathbb{Z}$ does not have the acyclic matching property. It seems that  for all $p>5$ one can find a pair $A,B\subset\mathbb{Z}/p\mathbb{Z}$ with $\#A=\#B$ and $0\not\in B$ and there is no acyclic matching from $A$ to $B$. Hence we have the following conjecture:
 \begin{conjecture}\label{c2.3}
 For any  prime $p>5$, $\mathbb{Z}/p\mathbb{Z}$ does not have the acyclic matching property.\\[-2mm]
  \end{conjecture}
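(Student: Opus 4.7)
The plan is to establish, for every prime $p>5$, the existence of a pair $A,B\subset\mathbb{Z}/p\mathbb{Z}$ with $\#A=\#B$ and $0\not\in B$ that admits no acyclic matching. By the contrapositive of Proposition~\ref{t2.2}, it suffices to exhibit, for each such $p$, a pair whose matching matrix $M_{A,B}$ is singular over $D(\mathbb{Z}/p\mathbb{Z})$; equivalently, in the Leibniz expansion the monomials grouped by multiplicity function must cancel pairwise. This reduces the conjecture to constructing, matching-by-matching, a sign-reversing involution that preserves $m_{\varphi}$, or (weaker) to showing that every monomial of $\det M_{A,B}$ appears with even signed multiplicity.

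The first step is to exploit the MATLAB data promised in Section~5: for each $p\in\{7,11,13,17,\dots\}$ one locates the minimal counterexamples and inspects their shape, looking for a recurring pattern such as short arithmetic progressions, symmetric pairs $B=-A$, or sets whose sumset $A+B$ has a nontrivial translation symmetry. Once a candidate family is identified, I would describe a canonical involution $\iota$ on the set of matchings from $A$ to $B$ with $m_{\iota(\varphi)}=m_\varphi$ and $\mathrm{sgn}(\iota(\varphi))=-\mathrm{sgn}(\varphi)$, which forces every nonzero monomial in the determinant to cancel and thus precludes acyclicity of every matching simultaneously.

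The second step is to promote the candidate family to a construction that is uniform in $p$. A natural attempt is to fix a small base pair $(A_0,B_0)\subset\mathbb{Z}$ and embed it into $\mathbb{Z}/p\mathbb{Z}$ for all $p>5$; if the relevant arithmetic avoids wrap-around, then the combinatorics of which sums $a_i+b_j$ lie in $A$ becomes independent of $p$, and the involutive cancellation argument transports uniformly. Should a single universal family not exist, I would split according to residue classes of $p$ modulo a small integer, analogous to the infinite family of \cite{2}, and produce a separate explicit $(A,B)$ in each class.

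The main obstacle I foresee is precisely this uniformity. Acyclicity, by definition, concerns cancellation of a \emph{single} diagonal term in~(2), so obstructing acyclicity requires simultaneous cancellation for every matching rather than one global involution; a local involution valid only for some matchings will still leave acyclic candidates. A related difficulty is balancing scale: for primes just above $5$ the ambient group is too small to accommodate a generic universal example without interference from wrap-around, while for large $p$ the number of matchings from $A$ to $B$ grows rapidly, making an explicit sign-reversing involution combinatorially delicate. A reasonable fallback is to combine a structural pigeonhole argument (ensuring many matchings share a multiplicity function for dimension reasons) with an ad hoc case analysis for the primes $7,11,13$ where the MATLAB search already certifies the conclusion.
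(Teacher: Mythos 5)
The statement you are addressing is Conjecture~\ref{c2.3}: the paper offers no proof of it. Its only support is the computation reported in Section~5, which exhibits, for each prime $7\leq p\leq 19$, an explicit pair $(A,B)$ with $\#A=\#B$, $0\not\in B$, and no acyclic matching, together with the result quoted from \cite{2} that infinitely many primes fail the acyclic matching property. Your proposal is likewise not a proof: the two steps that would carry all the weight --- extracting a uniform family $(A_p,B_p)$ from the data, and actually constructing the multiplicity-preserving pairing of matchings --- are left open, and you say as much in your final paragraph. So the honest verdict is that neither you nor the authors supply an argument; the statement remains a conjecture, and your text should be read as a research plan rather than a proof attempt that merely diverges from the paper's.

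Two concrete remarks on the plan itself. First, the reduction via Proposition~\ref{t2.2} overshoots: to rule out acyclic matchings from $A$ to $B$ you need only that every matching shares its multiplicity function with some \emph{other} matching, i.e.\ a fixed-point-free involution $\iota$ on the set of matchings with $m_{\iota(\varphi)}=m_\varphi$ and no sign condition whatsoever. Demanding in addition that $\iota$ be sign-reversing, so that $\det M_{A,B}=0$, is strictly stronger (three matchings with a common multiplicity function and signs $+,+,-$ already witness non-acyclicity while leaving a nonzero coefficient in the determinant), and Proposition~\ref{t2.2} is stated only as a necessary condition --- its converse is not available to you. Second, the counterexamples tabulated in Section~5 (e.g.\ $A=\{0,6,8,9,10\}$, $B=\{5,7,8,9,10\}$ for $p=11$) have cardinality roughly $p/2$ and consist of long runs ending at $p-1$, so the sums $a+b$ wrap around the modulus in an essential way; your fallback of embedding a fixed integer pair $(A_0,B_0)$ so that the arithmetic avoids wrap-around is therefore unlikely to reproduce these examples or to transport uniformly in $p$. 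Any serious attack will have to engage the modular structure directly, perhaps along the lines of the quadratic-residue and power-of-two constructions from \cite{2} mentioned in Remark~\ref{r2.6}.
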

Replacing the condition $0\not\in B$ with $A\cap (A+B)=\emptyset$ in the definition of the acyclic matching property guarantees that each bijection from $A$ to $B$ is a matching. It is also easy to verify that if every bijection $\varphi:A\to B$ is a matching, then $A\cap (A+B)=\emptyset$. Motivated by this observation, we  turn to a related, but weaker notion of acyclic matching property.
\begin{definition}\label{d2.4}
We say that $G$ possesses the weak acyclic  matching property if for every pair $A$ and $B$ of finite subsets of $G$ with $\#A=\#B$ and $A\cap(A+B)=\emptyset$ there exists at least one acyclic matching from $A$ to $B$. 
\end{definition}
Back to Losonczy's result (\cite[Theorem 4.1]{9}), every torsion-free abelian group has the acyclic matching property. Combining an observation from \cite{2} which states that there are infinitely many prime $p$ such that  $\mathbb{Z}/p\mathbb{Z}$ does not have the acyclic matching property with the fact that the weak acyclic matching property does not imply the acyclic matching property, one can probably find a prime $p$ such that $\mathbb{Z}/p\mathbb{Z}$ does not have the weak acyclic matching property.
\begin{conjecture}\label{c2.5}
There exists a  prime $p$ for which $\mathbb{Z}/p\mathbb{Z}$ does not satisfy the weak acyclic matching property.
\end{conjecture}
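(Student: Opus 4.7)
The plan is to produce an explicit prime $p$ and a pair of subsets $A, B \subseteq \mathbb{Z}/p\mathbb{Z}$ with $|A|=|B|$ satisfying $A \cap (A+B)=\emptyset$ and admitting no acyclic matching, using a combinatorial reduction together with a computer search in the spirit of Section~5. First I would make the following simplification: under the hypothesis $A\cap(A+B)=\emptyset$, every bijection $\varphi\colon A\to B$ is automatically a matching, since $a+\varphi(a)\in A+B$ cannot lie in $A$. Acyclicity of $\varphi$ then amounts to the statement that $\varphi$ is the unique bijection realizing the multiset $\{a+\varphi(a):a\in A\}$. So to falsify the weak acyclic matching property for $\mathbb{Z}/p\mathbb{Z}$, it suffices to exhibit $A, B$ for which every such multiset is achieved by at least two distinct bijections.

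I would then look for examples among highly structured pairs: translates $B = A+c$ with $A$ an arithmetic progression, or more generally a coset of a small subgroup of $(\mathbb{Z}/p\mathbb{Z})^\times$, with $A$ placed inside a short arc of $\mathbb{Z}/p\mathbb{Z}$ so that $A+B$ is automatically disjoint from $A$. For such $A$ the map $\sigma\mapsto\{a+\sigma(a):a\in A\}$ on $S_n$ is highly non-injective; already for an AP of length $3$ the two $3$-cycles collide, and longer APs force further collisions by reflection symmetry. The combinatorial task is to show that for a suitably chosen $A$ every element of $S_n$ shares its multiset with some other element. If a direct combinatorial argument proves elusive, one can instead invoke Proposition~\ref{t2.2} contrapositively: showing $\det M_{A,B}=0$ identically in $D(G)$ is a purely algebraic sufficient condition for non-acyclicity.

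Should a hand construction not emerge, the fallback is a MATLAB search in the same style as the one used for Conjecture~\ref{c2.3}: iterate over small primes $p$ and sizes $n$, enumerate pairs $(A,B)$ with $A\cap(A+B)=\emptyset$, and for each check whether every bijection has a multiset-mate. Crude pigeonhole counting (with $n!$ outgrowing the number of $n$-element multisets with entries in $A+B$) suggests that examples should surface at moderate $n$ and $p$. The principal obstacle throughout is that non-acyclicity is required uniformly over all $n!$ bijections, so the counterexamples to the ordinary acyclic matching property from \cite{2}, which only assume $0\notin B$, cannot be reused directly; they must either be verified to satisfy the stronger hypothesis $A\cap(A+B)=\emptyset$ or replaced by genuinely new examples compatible with it.
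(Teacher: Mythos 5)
This statement is presented in the paper as an open conjecture: the authors give no proof, only the heuristic motivation that the counterexamples to the ordinary acyclic matching property in \cite{2} suggest a counterexample might exist under the stronger hypothesis $A\cap(A+B)=\emptyset$, and their own computations (Section 5) show that every $\mathbb{Z}/n\mathbb{Z}$ with $1<n<23$ \emph{does} have the weak acyclic matching property. Your proposal is likewise not a proof but a search strategy, so neither text settles the statement; the question is whether your strategy is sound and whether it adds anything.

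Your reduction is correct and matches the paper's own framing: under $A\cap(A+B)=\emptyset$ every bijection is a matching, so the task is to find $A,B$ for which every bijection shares its multiplicity function with a distinct bijection. You also correctly identify the principal obstacle (non-acyclicity must hold uniformly over all $n!$ bijections) and correctly anticipate that the explicit families from \cite{2} cannot be reused --- the paper's Remark \ref{r2.6} makes exactly this point, showing that the sets of squares and of powers of $2$ violate $A\cap(A+B)=\emptyset$. However, two of your concrete suggestions do not survive scrutiny. First, the arithmetic-progression example: for $A,B$ APs of length $3$ with common difference $d$, only the two $3$-cycles collide; the identity and the three transpositions each produce distinct multisets, so the identity is an acyclic matching and the example fails. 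This illustrates rather than resolves the difficulty. Second, the pigeonhole count only forces \emph{some} pair of bijections to collide, which does nothing toward showing that \emph{every} bijection has a mate; and since $|A+B|\geq 2n-1$ forces $p\geq 3n-1$, combined with the paper's verified data for $n<23$, any counterexample lives in a range the authors themselves could not search (they report being unable to run even the weaker acyclic-matching check beyond $p=19$). The suggestion to show $\det M_{A,B}=0$ identically is also not obviously easier, since that determinant is a signed sum over all matchings of distinct monomials' products and its vanishing encodes essentially the same uniform cancellation condition. In short: your framing is faithful to the paper's, but the proposal contains no step that closes the gap the paper itself leaves open, and the specific constructions you float can be checked to fail.
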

\begin{remark}\label{r2.6}
Note that the techniques used in the proof of the existence of infinitely many prime $p$ with $\mathbb{Z}/p\mathbb{Z}$ not having the acyclic matching property mentioned in \cite{2} might not be useful to apply for Conjecture \ref{c2.5} as the set $A=\{n^2:\, n\in (\mathbb{Z}/p\mathbb{Z})^*\}$ (or $A=\{2^i:\, 0\leq i\leq m-1\}\subset\mathbb{Z}/p\mathbb{Z}$) does not meet the condition $A\cap (A+B)=\emptyset$ in case we assume $A=B$. The reason for the first case is that the equation $x^2+y^2\equiv z^2$, modulo $p$,  always has  solution (indeed, $\frac{p-1}{2}$ non-equivalent solutions for $p\geq7$), see \cite{5}. The second case is immediate from the fact that the equation $2^x+2^y\equiv 2^z$, modulo $p$, has always solution. However, the fact that ``any acyclic matching from $A$ to itself has a fixed point provided that $\#A$ is odd'' should be helpful (\cite[Lemma 2.1]{2}).
\end{remark}
The condition $A\cap (A+B)=\emptyset$, where $A,B\subset \mathbb{Z}/n\mathbb{Z}$, seems strong enough to guarantee the existence of at least one acyclic matching from $A$ to $B$ for most integers $n$. Running a MATLAB code for the existence of the weak acyclic matching property in $\mathbb{Z}/n\mathbb{Z}$, we obtained that for all $1<n<23$, $\mathbb{Z}/n\mathbb{Z}$ has the weak acyclic matching property (see Section 5). Hence we have the following conjecture:
\begin{conjecture}
There are infinitely many $n$ for which $\mathbb{Z}/n\mathbb{Z}$ has the weak acyclic matching property.
\end{conjecture}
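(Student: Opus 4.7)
The plan is to exhibit an explicit infinite subfamily of
\[
\mathcal{W}:=\{n\geq 2 : \mathbb{Z}/n\mathbb{Z}\text{ has the weak acyclic matching property}\}.
\]
A useful first observation is that $\mathcal{W}$ is closed under divisors: for $d\mid n$, the unique subgroup of $\mathbb{Z}/n\mathbb{Z}$ of order $d$ is isomorphic to $\mathbb{Z}/d\mathbb{Z}$, and both the hypothesis $A\cap(A+B)=\emptyset$ and the multiplicity-function definition of acyclicity restrict cleanly from $\mathbb{Z}/n\mathbb{Z}$ to this subgroup. Hence the conjecture is equivalent to $\mathcal{W}$ being unbounded.

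My main strategy is a lifting argument through the torsion-free case. Given $A,B\subset \mathbb{Z}/n\mathbb{Z}$ with $A\cap(A+B)=\emptyset$, choose canonical lifts $\tilde A,\tilde B\subset\{0,\dots,n-1\}$. The hypothesis forces $\tilde A\cap(\tilde A+\tilde B)=\emptyset$ already in $\mathbb{Z}$, so Losonczy's theorem \cite{9} supplies an acyclic matching $\tilde\varphi$ in $\mathbb{Z}$, whose projection $\varphi:A\to B$ is a candidate acyclic matching in $\mathbb{Z}/n\mathbb{Z}$. Using the matching matrix of Section~2, acyclicity of $\varphi$ modulo $n$ amounts to the statement that the monomial contributed by $\tilde\varphi$ remains the unique monomial with its $\mathbb{Z}/n\mathbb{Z}$-multiplicity function after the variable identifications $x_y\mapsto x_{y\bmod n}$. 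Equivalently, writing $\delta:=m_{\tilde\varphi'}-m_{\tilde\varphi}$ for any competing matching $\tilde\varphi'$ of $\tilde A\to\tilde B$, one must rule out the system $\delta(x)+\delta(x+n)=0$ for all $x\in\{0,\dots,n-2\}$ together with $\delta(n-1)=0$.

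Verifying this non-collapse condition is the main obstacle. My plan of attack is a pigeonhole/spacing argument calibrated to the size of $|A|$: since $\mathrm{supp}(m_{\tilde\varphi})\subseteq[0,2n-2]$ and the constraints imposed on $\tilde\varphi$ by acyclicity in $\mathbb{Z}$ are fairly rigid, for an infinite set of $n$ the required injectivity of the multiplicity function after reduction modulo $n$ should hold uniformly. If the direct lifting resists, a fallback is to prove a propagation lemma of the form $n\in\mathcal{W}\Rightarrow 2n\in\mathcal{W}$ or $n\in\mathcal{W}\Rightarrow n+1\in\mathcal{W}$, seeded by the MATLAB-verified base cases $1<n<23$ of Section~5; the divisor monotonicity of $\mathcal{W}$ established above would make even a weak propagation statement sufficient to conclude.
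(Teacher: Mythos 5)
This statement is left as a conjecture in the paper: the authors offer no proof, only the MATLAB verification that $\mathbb{Z}/n\mathbb{Z}$ has the weak acyclic matching property for all $1<n<23$. So there is no proof of record to compare against, and what you have written is a research plan rather than a proof; it does not establish the statement.

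The sound parts are the preliminary reductions. Closure of $\mathcal{W}$ under divisors is correct (for $H\le \mathbb{Z}/n\mathbb{Z}$ of order $d$ and $A,B\subset H$, the set of matchings and the multiplicity functions are the same whether computed in $H$ or in $\mathbb{Z}/n\mathbb{Z}$), and the lifting setup is also correct: if $A\cap(A+B)=\emptyset$ in $\mathbb{Z}/n\mathbb{Z}$ then the canonical lifts satisfy $\tilde A\cap(\tilde A+\tilde B)=\emptyset$ and $0\notin\tilde B$ in $\mathbb{Z}$, so Losonczy's theorem applies. The genuine gap is exactly the step you label ``the main obstacle'': you must show that the acyclic matching $\tilde\varphi$ over $\mathbb{Z}$ remains acyclic after reduction, i.e.\ that no competing bijection $\tilde\varphi'$ has $m_{\tilde\varphi'}-m_{\tilde\varphi}$ supported on pairs $\{x,x+n\}$ with cancelling signs. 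Nothing in your proposal rules this out: the multiplicity functions live on $[0,2n-2]$ and the identification $x\mapsto x\bmod n$ folds this interval exactly in half, so distinct multiplicity functions can certainly collapse; moreover Losonczy's theorem is a pure existence statement and gives you no control over which acyclic $\tilde\varphi$ you obtain, so you cannot ``calibrate'' it without reproving his theorem in a quantitative form. The phrase ``should hold uniformly'' is where the proof would have to be. The fallback is in no better shape: neither $n\in\mathcal{W}\Rightarrow 2n\in\mathcal{W}$ nor $n\in\mathcal{W}\Rightarrow n+1\in\mathcal{W}$ is supported by any mechanism (there is no evident way to transport the quantifier ``for every pair $A,B$'' from one modulus to another), and the computational base cases only reach $n=22$. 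As it stands, the conjecture remains open and your proposal does not close it.
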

\section*{Linear Analogue of Matching Matrices}
We now give the linear analogue of matching matrices for subspaces in a field extension.  Throughout this section, we shall assume that $K\subset L$ is a field extension, $A,B\subset L$ are two $n$-dimensional $K$-subspaces of $L$ and $\mathcal{A}=\{a_1,\ldots,a_n\}$, $\mathcal{B}=\{b_1,\ldots,b_n\}$ are ordered bases of $A,B$, respectively. The Minkowski product $AB$ of $A$ and $B$ is defined as $AB:=\{ab:\, a\in A, b\in B\}$. Recall that Eliahou and Lecouvey have introduced the following notions for matchable bases of subspaces in a field extension \cite{7}. The ordered basis $\mathcal{A}$ is said to be {\it matched} to an ordered basis $\mathcal{B}$ of $B$ if 
\begin{align*}
 a^{-1}_iA\cap B\subset \langle b_1,\ldots,\hat{b}_i,\ldots,b_n\rangle,
\end{align*}
for each $1\leq i\leq n$, where $\langle b_1,\ldots,\hat{b}_i,\ldots,b_n\rangle$ is the vector space spanned  by  $\mathcal{B}\setminus\{b_i\}$. The subspace $A$ is {\it matched}  to the subspace $B$ if every basis of $A$ can be matched to a basis of $B$. Finally, the extension $K\subset L$ has the {\it linear matching property} if for every $n\geq1$ and any pair $A$, $B$ of $n$-dimensional $K$-subspaces of $L$ with $1\not\in B$, $A$ is matched to $B$. \\
For each $c\in L$ we associate a variable $x_c$. Consider now the commutative ring of polynomials with integer coefficients in all possible variables. Call it $D(L)$. \\
 Define the $n\times n$  matrix $M_{\mathcal{A},\mathcal{B}}=[m_{ij}]_{n\times n}$ over $D(L)$ as follows:
\begin{align}
m_{ij}=\begin{cases} x_{a_ib_j}&\mathrm{if}\;\; a_i^{-1}A\cap B\subset \langle b_1,\ldots,\hat{b}_j,\ldots,b_n\rangle;\\ 0&\mathrm{otherwise}.\end{cases}
\end{align}
\begin{definition}\label{d2.7}
The matrix $M_{\mathcal{A},\mathcal{B}}$ is called the linear  matching matrix from $\mathcal{A}$ to $\mathcal{B}$.
\end{definition}
Various results on matchings in groups  have recently been transposed to a linear setting [1, 2 and 10], and our objective now is to study the linear analogue of Proposition \ref{t2.2} for field extensions. In order to investigate linear matching matrices and their  relation with the acyclic matching property of vector subspaces we need the definition of the {\it linear acyclic matching property} from \cite{2}. The main key to define the linear acyclic matching property is strong matchings. Following Eliahou and  Lecouvey in \cite{7}, we say that a linear isomorphism $\varphi:A\to B$ is a {\it strong matching} from $A$ to $B$ if every ordered basis $\mathcal{A}$ of $A$ is matched to the basis $\mathcal{B}=\varphi(\mathcal{A})$ of $B$, under the bijection induced by $\varphi$. The following criterion for existence of a strong matching will be used to define the acyclicity property.
\begin{theorem}[{\cite[Theorem 6.3]{7}}]\label{t2.8}
Using the above notations, there is a strong matching from $A$ to $B$ if and only if $AB\cap A=\{0\}$.
\end{theorem}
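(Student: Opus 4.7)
The plan is to prove the two implications separately, with the easy direction by a one-line chase and the converse by contrapositive. For the direction $(\Leftarrow)$, suppose $AB\cap A=\{0\}$ and let $\varphi:A\to B$ be \emph{any} linear isomorphism. Fix an ordered basis $\mathcal{A}=\{a_1,\ldots,a_n\}$ of $A$ and set $b_i:=\varphi(a_i)$. I would actually prove the stronger statement that $a_i^{-1}A\cap B=\{0\}$ for each $i$: if $c$ lies in this intersection then $c\in B$ and $a_ic\in A$, so $a_ic$ is a product of an element of $A$ and an element of $B$, hence $a_ic\in AB\cap A=\{0\}$. Since $a_i$ is a nonzero element of the field $L$, it is invertible, giving $c=0$. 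A trivial set is contained in every hyperplane $\langle b_1,\ldots,\hat b_i,\ldots,b_n\rangle$, so $\varphi$ is automatically a strong matching.

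For the direction $(\Rightarrow)$, I would argue by contrapositive: if $AB\cap A$ contains a nonzero element $ab$ with $a\in A$ and $b\in B$ both nonzero, then no linear isomorphism $\varphi:A\to B$ can be a strong matching. Given such $\varphi$, I choose the first vector of the basis to be $a_1:=a$; then $b\in a_1^{-1}A\cap B$ automatically. To defeat the matching condition I must exhibit a completion $a_2,\ldots,a_n$ to an ordered basis of $A$ with $b\notin\langle b_2,\ldots,b_n\rangle$, where $b_i:=\varphi(a_i)$. Because $\varphi$ is an isomorphism, prescribing $\langle a_2,\ldots,a_n\rangle$ is the same as prescribing an arbitrary hyperplane $H$ of $B$ not containing $b_1=\varphi(a_1)$ (the condition $b_1\notin H$ guarantees that $a_1\notin\varphi^{-1}(H)$, so the resulting tuple really is a basis of $A$). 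Thus the task collapses to a purely linear-algebraic one: find a hyperplane of $B$ avoiding both $b_1$ and $b$. If $b\in Kb_1$ any complement of $Kb_1$ works; otherwise $\{b_1,b\}$ is linearly independent, and extending it to a basis of $B$ and taking the kernel of a linear functional that is nonzero on both $b_1$ and $b$ produces the desired $H$.

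The main obstacle I anticipate is managing the quantifier structure in the converse: the hypothesis of a strong matching supplies a single $\varphi$, but allows us to choose the basis $\mathcal{A}$ freely, and one must translate the purely multiplicative obstruction ``$ab\in A$'' into a statement about hyperplanes in $B$ via $\varphi$. Once this dictionary is set up, the separate treatment of the two cases ($b$ collinear with $b_1$ or not) is routine, and no subtler tool than existence of an appropriate linear functional on $B$ is needed.
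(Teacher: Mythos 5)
Your argument is correct. Note, however, that the paper does not prove this statement at all: it is quoted verbatim from Eliahou--Lecouvey \cite[Theorem 6.3]{7}, so there is no in-paper proof to compare against. On its own merits, your proof is sound and is essentially the natural (and, as far as I can tell, the original) argument. The forward implication via the observation that $AB\cap A=\{0\}$ forces $a^{-1}A\cap B=\{0\}$ for every nonzero $a\in A$ (using invertibility in the field $L$) is exactly right, and it correctly yields the stronger conclusion that \emph{every} linear isomorphism $A\to B$ is then a strong matching. The contrapositive direction is also handled correctly: given nonzero $a\in A$, $b\in B$ with $ab\in A\setminus\{0\}$, your reduction of ``choose a completion of $a_1=a$ to a basis of $A$'' to ``choose a hyperplane $H$ of $B$ with $b_1=\varphi(a)\notin H$ and $b\notin H$'' is the right dictionary, and the existence of a linear functional nonvanishing at two prescribed nonzero vectors settles it (including the degenerate cases $b\in Kb_1$ and $n=1$, where the relevant hyperplane is $\{0\}$ and the condition fails because $b\neq 0$). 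No gaps.
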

Two linear isomorphisms $\varphi,\psi:A\to B$ are called {\it equivalent} if there exists a linear  automorphism $\phi:A\to A$ such that for all $a\in A$ one has $a\varphi(a)=\phi(a)\psi(\phi(a))$, and two strong matchings $\varphi,\psi:A\to B$ are equivalent if they are equivalent as linear isomorphisms. An {\it acyclic matching} from $A$ to $B$ is defined to be a strong matching $\varphi:A\to B$ such that for any strong matching $\psi:A\to B$ that is equivalent to $\varphi$, one has $\varphi=c\psi$, for some constant $c\in K$. Finally, it is said that the extension $K\subset L$ possesses the linear acyclic matching property if for every pair $A$ and $B$ of nonzero $n$-dimensional $K$-subspaces of $L$ with $AB\cap A=\{0\}$, there is at least one acyclic matching from $A$ to $B$. The main result concerning the linear acyclic  matching property is proven in \cite[Theorem 4.5]{2}, where it is shown that any purely transcendental field extension has the linear acyclic matching property.  The existence of acyclic matching is verified by the necessary condition if the following conjecture holds. This conjecture is basically the linear version of Proposition \ref{t2.2}.
\begin{conjecture}\label{c2.9}
If $\varphi:A\to B$ is an acyclic matching and $\mathcal{A}=\{a_1,\ldots,a_n\}$ is a basis of $A$,  then the matrix  $M_{\mathcal{A},\varphi(\mathcal{A})}$ is invertible.
\end{conjecture}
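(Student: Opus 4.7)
The plan is to adapt the proof of Proposition \ref{t2.2} to the linear setting by expanding $\det M_{\mathcal{A},\varphi(\mathcal{A})}$ via the Leibniz formula and isolating the summand coming from the acyclic matching $\varphi$ itself. Setting $b_i:=\varphi(a_i)$, the summand indexed by $\sigma\in\mathcal{S}_n$ equals $(-1)^\sigma\prod_{i=1}^n m_{i,\sigma(i)}$, and is nonzero precisely when $a_i^{-1}A\cap B\subset \langle b_1,\ldots,\hat{b}_{\sigma(i)},\ldots,b_n\rangle$ for every $i$, i.e.\ when the basis $\mathcal{A}$ of $A$ is matched to the reordered basis $(b_{\sigma(1)},\ldots,b_{\sigma(n)})$. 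Since $\varphi$ is a strong matching, Theorem \ref{t2.8} gives $AB\cap A=\{0\}$, which forces $a_i^{-1}A\cap B=\{0\}$ for each $i$, so that every linear isomorphism $\varphi_\sigma:A\to B$ defined on the basis by $\varphi_\sigma(a_i):=b_{\sigma(i)}$ is automatically a strong matching. The summand for $\sigma=\mathbf{id}$ is $\prod_i x_{a_i\varphi(a_i)}$, and the target is to show no other summand cancels it.

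I would then analyze cancellation. Two summands indexed by $\sigma\neq\tau$ produce identical monomials if and only if the multisets $\{a_ib_{\sigma(i)}\}_{i=1}^n$ and $\{a_ib_{\tau(i)}\}_{i=1}^n$ coincide as multisets in $L$. Specializing to $\tau=\mathbf{id}$, any such coincidence yields a permutation $\pi\in\mathcal{S}_n$ with
\begin{align*}
a_ib_i \;=\; a_{\pi(i)}\,b_{\sigma(\pi(i))} \;=\; a_{\pi(i)}\,\varphi_\sigma(a_{\pi(i)})\qquad (1\le i\le n).
\end{align*}
The natural candidate for an equivalence between $\varphi$ and $\varphi_\sigma$ is then the automorphism $\phi\in\mathrm{Aut}(A)$ given by $\phi(a_i):=a_{\pi(i)}$, for which the defining relation $a\varphi(a)=\phi(a)\varphi_\sigma(\phi(a))$ holds on every basis vector $a=a_i$. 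Because $b_1,\ldots,b_n$ are linearly independent, $\varphi_\sigma=c\varphi$ forces $\sigma=\mathbf{id}$, so once equivalence is established the acyclicity of $\varphi$ supplies a contradiction and gives $\det M_{\mathcal{A},\varphi(\mathcal{A})}\neq 0$.

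The principal obstacle, and where I expect the main work to lie, is promoting the basis-level identity into the full equivalence required by the definition preceding Conjecture \ref{c2.9}. The relation $a\varphi(a)=\phi(a)\varphi_\sigma(\phi(a))$ is quadratic in $a$, so its validity on a basis does \emph{not} automatically extend to all of $A$; one additionally needs the symmetric bilinear forms $(a,a')\mapsto a\varphi(a')+a'\varphi(a)$ and $(a,a')\mapsto \phi(a)\varphi_\sigma(\phi(a'))+\phi(a')\varphi_\sigma(\phi(a))$ to agree. Two routes appear plausible for closing this gap: first, exploit the rigidity imposed by $AB\cap A=\{0\}$, which severely restricts collisions among products $ab$ with $a\in A$, $b\in B$ and may force the cross-terms into alignment; second, replace the candidate $\phi$ by a perturbed automorphism, or recast the equivalence relation of acyclicity in a polarized form, so that monomial cancellation in $\det M_{\mathcal{A},\varphi(\mathcal{A})}$ lines up precisely with the paper's notion of equivalence. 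Either resolution constitutes the substantive content of the conjecture.
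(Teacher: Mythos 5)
The statement you are asked to prove is stated in the paper as Conjecture~\ref{c2.9} and is left open there: the authors give no proof, remarking only that it is ``basically the linear version of Proposition~\ref{t2.2}.'' So there is no proof in the paper to compare against, and the question is whether your argument actually settles the conjecture. It does not, and you have correctly located the reason yourself.

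Your setup is sound: since $\varphi$ is a strong matching, Theorem~\ref{t2.8} gives $AB\cap A=\{0\}$, hence $a_i^{-1}A\cap B=\{0\}$ for every $i$, every entry $m_{ij}=x_{a_ib_j}$ is nonzero, and the only way $\det M_{\mathcal{A},\varphi(\mathcal{A})}$ can vanish is through cancellation of monomials, i.e.\ through an equality of multisets $\{a_ib_i\}_i=\{a_ib_{\sigma(i)}\}_i$ for some $\sigma\neq\mathbf{id}$ (together with the right sign bookkeeping, which you do not address but which is a secondary issue). The genuine gap is exactly the one you name in your last paragraph: the equivalence of $\varphi$ and $\varphi_\sigma$ demanded by the paper's definition requires $a\varphi(a)=\phi(a)\varphi_\sigma(\phi(a))$ for \emph{all} $a\in A$, and this identity is quadratic in $a$, so verifying it on the basis vectors $a_i$ (which is all the multiset equality gives you) says nothing about the cross-terms $a_i\varphi(a_j)+a_j\varphi(a_i)$ versus $\phi(a_i)\varphi_\sigma(\phi(a_j))+\phi(a_j)\varphi_\sigma(\phi(a_i))$ for $i\neq j$. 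Neither of the two ``routes'' you sketch for closing this gap is carried out, and the second one (recasting the equivalence relation in polarized form) would amount to changing the definition rather than proving the statement. In the group setting of Proposition~\ref{t2.2} this issue does not arise because acyclicity there is defined directly by uniqueness of the multiplicity function, which is precisely the multiset datum the determinant expansion produces; in the linear setting the definition of acyclicity is routed through an equivalence of isomorphisms on all of $A$, and bridging that mismatch is the actual content of the conjecture. As written, your proposal is a correct reduction of the conjecture to this bridging problem, not a proof of it.
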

In what follows we provide  an  open problem  concerning the linear acyclic matching property in field extensions. It is shown in \cite{7} that if $K\subset L$ is a field extension, then $L$ has the linear matching property if and only if $L$ contains no proper finite-dimensional extension over $K$. But a similar classification for acyclic matching property is yet to be found. The above theorem implies that purely transcendental extensions and also finite extensions satisfy the linear matching property. As we mentioned above, the purely transcendental case is solved in \cite{2}. But characterizing field extensions of prime degree (or finite field extensions with no proper intermediate subfields in general) that satisfy the acyclic matching property remains an open problem. We have the following conjecture on the linear acyclic matching property of field extensions of prime degree.
\begin{conjecture}\label{c2.10}
There are infinitely many prime $p$  for which the following statement holds:\\
``There is a field extension $K\subset L$ with $[L:K]=p$, and $K\subset L$ does not admit  the linear acyclic  matching property."
\end{conjecture}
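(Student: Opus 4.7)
\textbf{Proof proposal for Conjecture \ref{c2.10}.}
The plan is to transfer the infinite family of bad primes for $\mathbb{Z}/p\mathbb{Z}$ from \cite{2} into the linear setting by means of a Kummer-type construction that faithfully mirrors the additive structure of $\mathbb{Z}/p\mathbb{Z}$ in the multiplicative structure of $L$. Fix a prime $p$ for which \cite[Proposition 2.2 and Proposition 2.3]{2} produce subsets $I,J\subset\mathbb{Z}/p\mathbb{Z}$ with $|I|=|J|$, $I\cap(I+J)=\emptyset$, and no acyclic matching $I\to J$. Let $K$ be any field containing a primitive $p$-th root of unity (e.g.\ $K=\mathbb{Q}(\zeta_p)$), fix $a\in K^{*}\setminus(K^{*})^p$, and set $L=K(\alpha)$ with $\alpha^p=a$. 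Then $[L:K]=p$ and $\{1,\alpha,\dots,\alpha^{p-1}\}$ is a $K$-basis of $L$ on which multiplication acts by $\alpha^{i}\alpha^{j}=a^{\lfloor(i+j)/p\rfloor}\alpha^{(i+j)\bmod p}$.

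Define the candidate counter-example
\[
A=\mathrm{span}_{K}\{\alpha^{i}:i\in I\},\qquad B=\mathrm{span}_{K}\{\alpha^{j}:j\in J\}.
\]
The relation above, together with $I\cap(I+J)=\emptyset$, forces $AB\cap A=\{0\}$, so by Theorem~\ref{t2.8} strong matchings $A\to B$ exist and it makes sense to ask for an acyclic one. Suppose for contradiction that some acyclic matching $\varphi:A\to B$ exists; the goal is to extract from $\varphi$ an acyclic matching $\tilde\varphi:I\to J$ in $\mathbb{Z}/p\mathbb{Z}$, contradicting the choice of $(I,J)$. Concretely, one would expand the linear matching matrix $M_{\mathcal{A},\varphi(\mathcal{A})}$ in the monomial basis $\mathcal{A}=\{\alpha^{i}:i\in I\}$, observe that each non-zero entry lies in a graded piece $K\cdot\alpha^{(i+j)\bmod p}$, and group terms of $\det M_{\mathcal{A},\varphi(\mathcal{A})}$ by the induced exponent-level bijection. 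The hope is that non-cancellation of the term corresponding to $\mathrm{id}$ (which is what acyclicity of $\varphi$ guarantees once Conjecture~\ref{c2.9} is available) descends, after the substitution $x_{\alpha^{i}\alpha^{j}}\mapsto x_{(i+j)\bmod p}$, to non-cancellation of the corresponding term in the group-theoretic matching matrix from Proposition~\ref{t2.2}, producing the desired $\tilde\varphi$.

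The main obstacle is exactly this rigidity/descent step. A linear map $\varphi$ is free to send $\alpha^{i}$ to an arbitrary $K$-combination $\sum_{j\in J}c_{ij}\alpha^{j}$, and the equivalence relation used to define acyclicity in the linear setting (post-composition with a $K$-automorphism $\phi$ of $A$) is strictly richer than permutation equivalence in $\mathbb{Z}/p\mathbb{Z}$; it is conceivable that $K$-linear combinations produce ``phantom'' acyclic matchings that have no monomial representative. To overcome this, I would first try to prove that within each equivalence class of strong matchings there is a representative that is diagonal with respect to the monomial basis, using the Galois action of $\mathrm{Gal}(L/K)\cong\mathbb{Z}/p\mathbb{Z}$ (which scales $\alpha^{i}$ by $\zeta_{p}^{i}$) to average out off-diagonal terms; such a diagonal representative then descends canonically to a map $I\to J$. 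If this averaging argument does not work unconditionally, a fallback is to first establish Conjecture~\ref{c2.9}, then analyse the $\det M_{\mathcal{A},\varphi(\mathcal{A})}$ term-by-term under the specialisation above. Either route would yield the conjecture for every $p$ in the infinite family of \cite{2}.
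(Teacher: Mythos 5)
This statement is posed in the paper as an open conjecture; there is no proof in the paper to compare against, so the only question is whether your argument actually closes it. It does not, and the most serious gap is not the descent step you flag but the input you feed into the construction. Your Kummer-type subspaces $A=\mathrm{span}_K\{\alpha^i:i\in I\}$, $B=\mathrm{span}_K\{\alpha^j:j\in J\}$ need $I\cap(I+J)=\emptyset$ in $\mathbb{Z}/p\mathbb{Z}$ in order to get $AB\cap A=\{0\}$, which is the hypothesis of the linear acyclic matching property that your counterexample must satisfy (via Theorem \ref{t2.8}). But the pairs produced in \cite{2} for the infinitely many bad primes satisfy only $0\notin B$, not $I\cap(I+J)=\emptyset$: Remark \ref{r2.6} of this paper points out explicitly that the quadratic residues and the powers of $2$ used there fail the condition $A\cap(A+B)=\emptyset$, because $x^2+y^2\equiv z^2$ and $2^x+2^y\equiv 2^z$ always have solutions mod $p$. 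What you actually need as input is an infinite-primes strengthening of Conjecture \ref{c2.5} (failure of the \emph{weak} acyclic matching property), which is itself open even for a single prime. So the first step of your reduction already assumes something at least as hard as an unproved conjecture of the paper.

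Even granting that input, the transfer from the group setting to the linear setting is not established. You correctly identify that the equivalence relation defining linear acyclicity ($a\varphi(a)=\phi(a)\psi(\phi(a))$ for a $K$-automorphism $\phi$ of $A$) is strictly richer than the multiplicity-function equivalence in $\mathbb{Z}/p\mathbb{Z}$, but the proposed remedies are both conditional: the averaging argument that every equivalence class of strong matchings contains a monomial (diagonal) representative is not carried out and is not obviously true, and the fallback route runs through Conjecture \ref{c2.9}, which is also open, and even if true gives only a necessary condition for the existence of an acyclic matching, not the non-existence statement you need. As it stands the proposal is a reasonable research programme --- the Kummer construction is a natural candidate for realizing group-level counterexamples linearly --- but it proves nothing unconditionally: every load-bearing step rests on an unproved conjecture or an unexecuted argument.
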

The following theorem gives  us  a tool for constructing matchable subspaces in field extensions that contain proper finite-dimensional intermediate subfields.
\begin{theorem}[{\cite[Theorem 5.5]{7}}]\label{t2.11}
Let $K\subset L$ be a field extension. Let $A,B\subset L$ be $n$-dimensional subspaces of $L$, with $1\not\in B$ and $n<n_0(K,L)$, where $n_0(K,L)$ stands for the smallest degree of an intermediate field extension $K\subsetneqq\mathbb{F}\subset L$. Then $A$ is matched to $B$.
\end{theorem}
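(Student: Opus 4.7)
The plan is to combine a Hall-type reduction (via duality and Rado's theorem) with a linear analogue of Kneser's addition theorem for products of $K$-subspaces of $L$. Fix an arbitrary ordered basis $\mathcal{A}=\{a_1,\ldots,a_n\}$ of $A$; I will construct a matching basis $\mathcal{B}$ of $B$. Set $V_i=a_i^{-1}A\cap B$. A basis $\mathcal{B}=\{b_1,\ldots,b_n\}$ of $B$ matches $\mathcal{A}$ iff the $i$-th dual coordinate functional of $b_i$ lies in $V_i^{\perp}\subseteq B^*$. By Rado's theorem for the vector matroid on $B^*$, such a transversal basis $(f_1,\ldots,f_n)$ exists iff $\dim\sum_{i\in I}V_i^{\perp}\geq|I|$ for every $I\subseteq\{1,\ldots,n\}$, equivalently $\dim\bigcap_{i\in I}V_i\leq n-|I|$. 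Since $\bigcap_{i\in I}V_i=\{b\in B:Vb\subseteq A\}$ where $V=\langle a_i:i\in I\rangle$, the theorem will follow from the estimate: for every nonzero $K$-subspace $V\subseteq A$ of dimension $k$,
\[
\dim_K\{b\in B:Vb\subseteq A\}\leq n-k.
\]

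To prove this estimate I would set $U=\{b\in B:Vb\subseteq A\}$, $d=\dim_K U$, and exploit $1\notin B$ as follows. Since $B$ is a $K$-subspace not containing $1$, one has $U\cap K=0$, so $U':=U\oplus K\cdot 1$ has dimension $d+1$ and $VU'=VU+V\subseteq A$. Consider next the stabilizer $H:=\{x\in L:xVU'\subseteq VU'\}$, which is a subfield of $L$ containing $K$ (if $x\in H\setminus\{0\}$, multiplication by $x$ is an injective $K$-endomorphism of the finite-dimensional $VU'$, hence surjective, so $x^{-1}\in H$). If $H\supsetneq K$, then $[H:K]\geq n_0(K,L)>n$; but $VU'$ is a nonzero $H$-module, so $[H:K]$ divides $\dim_K VU'\leq n$, a contradiction. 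Hence $H=K$, and the linear Kneser-type inequality applied to $VU'$ gives $\dim_K VU'\geq \dim_K V+\dim_K U'-\dim_K H=k+(d+1)-1=k+d$. Combined with $VU'\subseteq A$ this yields $k+d\leq n$, i.e.\ $d\leq n-k$, which is the required Hall bound. Since $\mathcal{A}$ was arbitrary, $A$ is matched to $B$.

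The main obstacle, and the step where both hypotheses are truly used, is this Kneser-type estimate. The condition $1\notin B$ is exactly what allows enlarging $U$ to $U'$ by a one-dimensional summand, sharpening the generic linear Kneser bound $k+d-1$ into the needed $k+d$; the condition $n<n_0(K,L)$ is precisely what forces the stabilizer of $VU'$ to collapse to $K$, via the divisibility $[H:K]\mid\dim_K VU'\leq n$. Relaxing either hypothesis would allow an intermediate subfield of degree at most $n$ to obstruct the argument. The remaining pieces (dualization in $B^*$ and Rado's transversal theorem) are standard, so the proof really hinges on invoking the linear analogue of Kneser's theorem in the regime where both the ``augment by $K\cdot 1$'' trick and the trivial-stabilizer step are simultaneously available.
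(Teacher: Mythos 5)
This statement is not proved in the paper at all: it is imported verbatim from Eliahou--Lecouvey \cite[Theorem 5.5]{7} and used as a black box, so there is no internal proof to compare against. Measured against the actual proof in \cite{7}, your skeleton is the right one and essentially the same: reduce, via the dimension criterion (Proposition \ref{p3.1}, i.e.\ duality in $B^*$ plus Rado's transversal theorem), to the Hall-type bound $\dim_K\{b\in B:\,Vb\subset A\\}\leq n-\dim_K V$ for $V=\langle a_i: i\in J\rangle$; then prove that bound by adjoining $K\cdot 1$ to $U=\{b\in B:\,Vb\subset A\}$ (legitimate exactly because $1\notin B$ forces $B\cap K=\{0\}$) and lower-bounding $\dim_K\langle VU'\rangle$. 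The identification $\bigcap_{i\in J}(a_i^{-1}A\cap B)=\{b\in B:\,Vb\subset A\}$, the computation $\langle VU'\rangle=\langle VU\rangle+V\subset A$, and the stabilizer argument forcing $H=K$ from $[H:K]\mid\dim_K\langle VU'\rangle\leq n<n_0(K,L)$ are all correct.

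The one step you cannot invoke as stated is ``the linear Kneser-type inequality'' for an arbitrary field extension. The Hou--Leung--Xiang linear analogue of Kneser's theorem, $\dim_K\langle VU'\rangle\geq\dim_K V+\dim_K U'-\dim_K H$, is proved under a separability hypothesis (every element of $L$ algebraic over $K$ separable), and the fully general stabilizer form was not available when \cite{7} was written; the theorem you are proving carries no separability assumption. So the logic ``first show $H=K$, then apply Kneser'' quietly assumes the general Kneser theorem. What is actually needed, and what Eliahou--Lecouvey prove and use, is the weaker Diderrich/Cauchy--Davenport-type bound
\begin{equation*}
\dim_K\langle VU'\rangle\;\geq\;\min\bigl(\dim_K V+\dim_K U'-1,\; n_0(K,L)\bigr),
\end{equation*}
valid for arbitrary extensions; since $\langle VU'\rangle\subset A$ gives $\dim_K\langle VU'\rangle\leq n<n_0(K,L)$, the minimum must be $k+d$, and the Hall bound $k+d\leq n$ follows. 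Replace your Kneser citation by this statement (or add a separability hypothesis) and the proof is complete. A cosmetic point: $VU'$ should throughout be read as the $K$-span of the Minkowski product, since the product set itself need not be a subspace.
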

In what follows, we provide another observation which can be used to investigate matchable subspaces in field extensions that contain proper finite-dimensional intermediate subfields.  We begin with a generalization for matchable bases in the following definition:
\begin{definition}\label{d2.13}
Let $\tilde{\mathcal{A}}=\{a_1,\ldots,a_m\}$ and $\tilde{\mathcal{B}}=\{b_1,\ldots,b_m\}$ be subsets of $A$ and $B$, respectively, (not linearly independent sets necessarily). Let $V$ be a $K$-subspace of $L$ and $\sigma\in \mathcal{S}_m$. We say that $\tilde{\mathcal{A}}$ is matched to $\tilde{\mathcal{B}}$ with respect to $(V,\sigma)$ if 
\begin{align}
a_i^{-1}A\cap B\subset V\oplus \langle b_1,\ldots,\hat{b}_{\sigma(i)},\ldots,b_m\rangle,
\end{align}
for all $1\leq i\leq m$.
\end{definition}
Note that if $m=n$, and  $\tilde{\mathcal{A}}$ and $\tilde{\mathcal{B}}$ are bases of $A$ and $B$, respectively, then $\tilde{\mathcal{A}}$ is matched to $\tilde{\mathcal{B}}$ in the usual sense provided $\tilde{\mathcal{A}}$ is matched to $\tilde{\mathcal{B}}$ with respect to $(\{0\},\mathbf{id})$, where $\{0\}$ denotes the trivial $K$-subspace of $L$ and $\mathbf{id}$ denotes the identity permutation on $\{1,\ldots,n\}$.
\begin{remark}
If $V$ and $W$ are $K$-subspaces of $L$ with $V\subset W$, then any matchable subsets with respect to $(V,\sigma)$ are matchable with respect to $(W,\sigma)$. 
\end{remark}
\begin{proposition}
Let $K\subset L$ and $K\subset E$ be two field extensions, $\sigma\in \mathcal{S}_n$ and $T\in \mathrm{Hom}_K(L,E)$. Then $\{Ta_1,\ldots,Ta_n\}$ is matched to $\{Tb_1,\ldots,Tb_n\}$ with respect to $(\{0\},\sigma)$ if and only if $\{a_1,\ldots,a_n\}$ is matched to $\{b_1,\ldots,b_n\}$ with respect to $(\ker T,\sigma)$.
\end{proposition}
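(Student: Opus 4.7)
The plan is to unwind both sides of the equivalence using Definition \ref{d2.13} and shuttle witnesses between the $L$-side and the $E$-side through $T$. The only nontrivial ingredient is that $T$ transports the products that actually appear, namely $T(a_iy)=(Ta_i)(Ty)$, so that $a_i^{-1}A\cap B$ on one side corresponds (up to $\ker T$) to $(Ta_i)^{-1}T(A)\cap T(B)$ on the other; the kernel of $T$ absorbs exactly the discrepancy between the two inclusions, which is what the asymmetric appearance of $\{0\}$ on the $E$-side and $\ker T$ on the $L$-side is doing.

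For the direction ``$E$-side implies $L$-side'', I would take any $y\in a_i^{-1}A\cap B$, observe $Ty\in T(B)$ and $T(a_iy)=(Ta_i)(Ty)\in T(A)$, and deduce $Ty\in (Ta_i)^{-1}T(A)\cap T(B)$. The hypothesis then places $Ty$ inside $\langle Tb_1,\ldots,\widehat{Tb_{\sigma(i)}},\ldots,Tb_n\rangle$, and writing $Ty=T(w)$ for some $w\in\langle b_1,\ldots,\hat b_{\sigma(i)},\ldots,b_n\rangle$ forces $y-w\in\ker T$, giving $y\in\ker T\oplus\langle b_1,\ldots,\hat b_{\sigma(i)},\ldots,b_n\rangle$, as required.

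For the converse, I would take $y'\in(Ta_i)^{-1}T(A)\cap T(B)$, lift to $z\in B$ with $Tz=y'$, and use $T(a_iz)=(Ta_i)(Tz)\in T(A)$ to produce $s\in A$ and $k\in\ker T$ with $a_iz=s+k$. Replacing $z$ by $\tilde z:=z-a_i^{-1}k$ gives $a_i\tilde z=s\in A$, while the identity $T(a_i^{-1}k)=(Ta_i)^{-1}T(k)=0$ secures $T\tilde z=y'$. Applying the $L$-side hypothesis to $\tilde z$ yields $\tilde z\in\ker T\oplus\langle b_1,\ldots,\hat b_{\sigma(i)},\ldots,b_n\rangle$, and pushing through $T$ delivers $y'\in\langle Tb_1,\ldots,\widehat{Tb_{\sigma(i)}},\ldots,Tb_n\rangle$.

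The delicate point is the converse step: the translate $\tilde z=z-a_i^{-1}k$ need not remain in $B$, because $\ker T$ is only a $K$-subspace (not an ideal) of $L$, and so $a_i^{-1}\ker T$ can fall outside $B$. I plan to dispose of this by exploiting the flexibility built into Definition \ref{d2.13} — $\tilde{\mathcal{B}}$ is allowed to be an arbitrary subset of $B$, not a basis — together with the monotonicity observation in the remark following the definition: enlarging the ambient ``$B$'' slot to $B+a_i^{-1}\ker T$ leaves the right-hand side $\ker T\oplus\langle b_1,\ldots,\hat b_{\sigma(i)},\ldots,b_n\rangle$ unchanged (it already absorbs $\ker T$), so the $L$-side hypothesis passes to the enlargement and $\tilde z$ becomes an admissible witness. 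Verifying this enlargement step cleanly is the main technical work.
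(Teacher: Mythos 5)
Your argument hinges on the identity $T(a_iy)=(Ta_i)(Ty)$, which you call the only nontrivial ingredient, but this identity is incompatible with the rest of your setup. If $T\in\mathrm{Hom}_K(L,E)$ is merely $K$-linear (which is what the notation says), the identity simply fails: a $K$-linear map between field extensions need not respect products (the $\mathbb{Q}$-linear projection $\mathbb{Q}(\sqrt2)\to\mathbb{Q}$ sends $\sqrt2\cdot\sqrt2=2$ to $2$, while $T(\sqrt2)\,T(\sqrt2)=0$), and then neither direction of your proof gets off the ground. If, on the other hand, $T$ is multiplicative in the way you use it, then $\ker T$ is an ideal of the field $L$, hence either $\{0\}$ or all of $L$; since $T=0$ is excluded (you invert $Ta_i$), you are forced into $\ker T=\{0\}$, the statement collapses to transporting the matching condition through an injective algebra map, and all of your bookkeeping with $k\in\ker T$ and the correction term $a_i^{-1}k$ is vacuous. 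The paper's own proof is the single line ``trivial,'' which is defensible only under this second reading; your write-up instead occupies a regime (multiplicative $T$ with nontrivial kernel) that does not exist.

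Separately, the fix you propose for the ``delicate point'' in the converse does not work even on its own terms. The hypothesis constrains only the set $a_i^{-1}A\cap B$; replacing $B$ by $B+a_i^{-1}\ker T$ strictly enlarges the left-hand side of that inclusion, and nothing guarantees that the enlarged set still lands in $\ker T\oplus\langle b_1,\ldots,\hat{b}_{\sigma(i)},\ldots,b_n\rangle$. The monotonicity remark following Definition~\ref{d2.13} licenses enlarging the subspace $V$, not the ambient space $B$, so it cannot be invoked here. You should either state explicitly that $T$ is a $K$-algebra homomorphism, in which case $\ker T=\{0\}$ and a two-line injectivity argument suffices, or identify the class of non-multiplicative maps the proposition is meant to cover and reprove it without ever using $T(xy)=(Tx)(Ty)$.
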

\begin{proof}
The proof is trivial.
\end{proof}

\section{Local Matchings}
In this section, we first  provide a dimension  criterion to study linear local matchings in field extension similar to those in matchable bases. We also introduce the weakly locally  matchable subspaces and prove that matchable subspaces are weakly locally  matchable as well.  Recall  from \cite{3} that if $A,B\subset L$ are two $n$-dimensional $K$-subspaces, $n>1$, and $\tilde{A}$ and $\tilde{B}$ are two non-zero $m$-dimensional vector subspaces of $A$  and $B$, respectively, it is said that $\tilde{A}$ is {\it $A$-matched} to $\tilde{B}$, if  for any ordered  basis $\tilde{\mathcal{A}}=\{a_1,\ldots,a_m\}$ of $\tilde{A}$, there exists an ordered basis $\tilde{\mathcal{B}}=\{b_1,\ldots,b_m\}$ of $\tilde{B}$ for which $a_ib_i\not\in A$, for $i=1,\ldots,m$. In this case, it is said that $\tilde{\mathcal{A}}$ is $A$-matched to $\tilde{\mathcal{B}}$. It is also said that $A$ is {\it locally matched} to $B$ if for any intermediate subfield $K\subset H\subsetneqq L$ with $H\cap B\neq\{0\}$ and $aH\subset A$, for some $a\in A$, one can find a subspace $\tilde{A}$ of $A$ such that $\tilde{A}$ is $A$-matched to $H\cap B$. Finally, $K\subset L$ is said to have the {\it linear local matching property} if, for every $n\geq1$ and every pair of $n$-dimensional subspaces $A$ and $B$ of $L$ with $1\not\in B$, the subspace $A$ is locally matched to $B$. It is conjectured in \cite{3} that if $A$ is matched to $B$, in the field extension setting sense, then $A$ is locally matched to $B$. This problem is still unsolved. However, we will prove a dimension criterion for matchable subspaces  similar to the following dimension criterion to deal with local matchings. 
\begin{proposition}[{\cite[Proposition 3.1]{7}}]\label{p3.1}
Let $\mathcal{A}=\{a_1,\ldots,a_n\}$ be an ordered  basis of $A$. Then $\mathcal{A}$ can be matched to a basis of $B$ if and only if for all non-empty $J\subset\{1,\ldots,n\}$, we have 
\begin{align}
\dim\underset{i\in J}{\bigcap}(a_i^{-1}A\cap B)\leq n-\# J.
\end{align}
\end{proposition}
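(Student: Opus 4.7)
The strategy is to recast the matching condition as a linear-algebraic Hall-type statement on the dual space $B^*$ and then invoke (or re-derive) the linear analogue of Hall's marriage theorem. Set $V_i := a_i^{-1}A \cap B$ for $1\leq i\leq n$; these are $K$-subspaces of $B$. By the definition of matching, $\mathcal{A}$ is matched to a basis $\{b_1,\ldots,b_n\}$ of $B$ precisely when $V_i\subset \langle b_1,\ldots,\hat{b}_i,\ldots,b_n\rangle$ for every $i$, so the task is to decide when such a basis exists.

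For necessity, suppose such a basis is given. For any nonempty $J\subset\{1,\ldots,n\}$, the intersection of the hyperplanes $\langle b_1,\ldots,\hat{b}_i,\ldots,b_n\rangle$ taken over $i\in J$ equals $\langle b_j : j\notin J\rangle$, which has dimension $n-\#J$; since $\bigcap_{i\in J}V_i$ lies inside this intersection, the claimed inequality follows immediately.

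For sufficiency I would dualize. Put $U_i := V_i^\perp\subset B^*$. The annihilator identity $(\bigcap_{i\in J}V_i)^\perp = \sum_{i\in J}U_i$ combined with the hypothesis gives
$$\dim \sum_{i\in J} U_i \;=\; n-\dim\bigcap_{i\in J}V_i \;\geq\; \#J$$
for every nonempty $J$. Applying Rado's theorem (the linear analogue of Hall's marriage theorem for a finite family of subspaces), one obtains elements $f_i\in U_i$ that are linearly independent, so $\{f_1,\ldots,f_n\}$ is a basis of $B^*$. Taking $\{b_1,\ldots,b_n\}\subset B$ to be the dual basis gives $V_i\subset\ker f_i = \langle b_j : j\neq i\rangle$, which is exactly the matching condition.

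The main obstacle is the sufficiency half, which depends on the linear Hall theorem. If a self-contained treatment is preferred, one proves it by induction on $n$: the case $n=1$ is trivial since $\dim U_1\geq 1$, and the inductive step splits according to whether some proper $J\subsetneq\{1,\ldots,n\}$ saturates $\dim\sum_{i\in J}U_i=\#J$. In the non-saturated case one chooses $f_n\in U_n$ avoiding the span of representatives produced inductively for the other $U_i$; in the saturated case one applies induction separately to $J$ and to the quotient problem on $\{1,\ldots,n\}\setminus J$ in $B^*/\sum_{i\in J}U_i$. The delicate point is verifying that the dimension inequalities transfer to both subproblems after quotienting, which is routine bookkeeping but needs a careful check.
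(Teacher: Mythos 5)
Your argument is correct, and it is essentially the canonical proof: the paper itself does not prove Proposition~\ref{p3.1} (it imports it from \cite{7}), but your dualization to $B^*$ followed by Rado's theorem and passage to the dual basis is exactly the technique the paper deploys in its proof of Theorem~\ref{t3.6}, and is the linear analogue of its Hall-theorem proof of Lemma~\ref{l3.7}. Nothing further is needed; citing Rado's theorem (as the paper does via \cite{11}) is preferable to the sketched induction, which is standard but not required here.
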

The following dimension criterion  will be used to study a weaker version of locally matchable subspaces called {\it weakly locally matchable subspaces}.
\begin{theorem}\label{t3.2}
Let $K\subset L$ be a field extension and $A$, $B$ be two non-zero $n$-dimensional $K$-subspaces of $L$ such that $A$ is matched to $B$. 
Let ${\mathcal{B}}=\{b_1,\ldots,b_n\}$ be an ordered  basis for $B$. Define  $A_b=\{a\in A:\, ab\in A\}$, for any $b\in B$. Then we have
\begin{align}\label{eq1}
\dim \underset{i\in J}{\bigcap} A_{b_i}\leq n-\#J,
\end{align}
for any $J\subset \{1,\ldots,n\}$.
\end{theorem}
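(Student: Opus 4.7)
The plan is to prove the inequality by contradiction and reduce everything to the Hall-type dimension criterion of Proposition~\ref{p3.1}, applied to a carefully chosen basis of $A$. The guiding observation is that for any nonzero $b\in L$ one has the identity $A_b=A\cap b^{-1}A$, so the intersection $\bigcap_{i\in J}A_{b_i}$ records exactly the ``$\mathcal{B}$-side'' analogue of the intersections of translates of $A$ that appear in Proposition~\ref{p3.1}; the desired inequality should therefore emerge by swapping the roles of $\mathcal{A}$ and $\mathcal{B}$ in the matching data.

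Concretely, I would suppose for contradiction that $\dim\bigcap_{i\in J}A_{b_i}\geq n-\#J+1$ and choose a subspace $W$ of dimension exactly $n-\#J+1$ inside that intersection. Fix a basis $w_1,\ldots,w_{n-\#J+1}$ of $W$ and extend it to an ordered basis $\mathcal{W}=\{w_1,\ldots,w_n\}$ of $A$, placing the ``offending'' vectors in the initial segment. Since $A$ is matched to $B$, the basis $\mathcal{W}$ can be matched to some basis of $B$, so Proposition~\ref{p3.1} yields
\[\dim\bigcap_{i\in I}\bigl(w_i^{-1}A\cap B\bigr)\leq n-\#I\]
for every non-empty $I\subset\{1,\ldots,n\}$. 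Taking $I=\{1,\ldots,n-\#J+1\}$, so that $\#I+\#J=n+1$, specializes this to $\dim\bigcap_{i\in I}(w_i^{-1}A\cap B)\leq\#J-1$.

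The contradiction should then drop out immediately: for each $i\in I$ the vector $w_i$ lies in $W\subset A_{b_j}$ for every $j\in J$, hence $w_ib_j\in A$, i.e.\ $b_j\in w_i^{-1}A\cap B$. Thus the $\#J$ linearly independent vectors $\{b_j\}_{j\in J}$ all belong to $\bigcap_{i\in I}(w_i^{-1}A\cap B)$, forcing its dimension to be at least $\#J$, in direct conflict with the bound above. I do not anticipate any serious obstacle beyond organizing the bookkeeping cleanly: the pairing $\#I+\#J=n+1$ and the identity $A_b=A\cap b^{-1}A$ are the only two ingredients one has to recognize, after which the argument is essentially Proposition~\ref{p3.1} read \emph{from the other side} of the matching.
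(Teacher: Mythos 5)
Your proposal is correct and is essentially the paper's own argument: both proofs take a basis of (a full-dimensional subspace of) $\bigcap_{i\in J}A_{b_i}$, extend it to a basis of $A$, observe that each $b_j$ with $j\in J$ then lies in $\bigcap_k\bigl(w_k^{-1}A\cap B\bigr)$, and invoke Proposition~\ref{p3.1}; the paper just runs this directly (getting $\#J\leq n-t$) rather than by contradiction. The only cosmetic difference is your contradiction framing and the explicit identity $A_b=A\cap b^{-1}A$, neither of which changes the substance.
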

\begin{proof}
If $\underset{i\in J}{\bigcap}A_{b_i}=\{0\}$, then \eqref{eq1} holds. Now, assume that $\underset{i\in J}{\bigcap} A_{b_i}$ is non-zero and $\{a_1,\ldots,a_t\}$ is an ordered  basis for it. As $a_k\in \underset{i\in J}{\bigcap}A_{b_i}$, for all $1\leq k\leq t$, then $a_kb_i\in A$, for all $i\in J$. This means $b_i\in a_k^{-1}A\cap B$, for all $i\in J$ and so $b_i\in \overset{t}{\underset{k=1}{\bigcap}}\left(a_k^{-1}A\cap B\right)$. This follows, $\# J\leq \dim  \overset{t}{\underset{k=1}{\bigcap}}\left(a_k^{-1}A\cap B\right)$. Since $A$ is matched to $B$, then by Proposition \ref{p3.1} we have $\dim  \overset{t}{\underset{k=1}{\bigcap}}\left(a_k^{-1}A\cap B\right)\leq n-t$. Thus, we totally obtain $t\leq n-\# J$ and this implies   that \eqref{eq1} is the case, as claimed.
\end{proof}
\section*{A Connection to the $m$-intersection Property}
The topic of upper bounds for the intersections of some families of sets has found some interest in literature, for example see \cite[Theorem 1.1]{6n}. Following Brualdi, Friedland and Pothen \cite{6n} we say that the family $\mathcal{J}=\{J_1,\ldots,J_t\}$ of subsets of $\{1,\ldots,n\}$, each of cardinality $m-1$ satisfies the {\it $m$-intersection property} provided 
\[\#\underset{i\in J}{\bigcap}J_i\leq m-\# J,\]
for any $J\subset \{1,\ldots,t\}$, $J\neq\emptyset$. We generalize this notion as follows:
\begin{definition}
The family of $\mathcal J=\{J_1,\ldots,J_t\}$ of subsets of $\{1,\ldots,n\}$, each of cardinality $\leq m-1$ satisfies the {\it weak $m$-intersection property} provided 
\[\# \underset{i\in J}{\bigcap} J_i\leq m-\#J,\]
for all $J\subset\{1,\ldots,t\}$, $J\neq\emptyset$.
\end{definition}
We now formulate the linear analogue of this concept.
\begin{definition}
Let $E$ be an $n$-dimensional $K$-vector space and $\mathcal{E}=\{E_1,\ldots,E_t\}$ be a family of  subspaces of $E$ with $\dim_KE_i\leq m-1$. We say that $\mathcal{E}$ satisfies the weak   linear $m$-intersection property provided
\[\dim\underset{i\in J}{\bigcap}E_i\leq m-\#J,\]
for any $J\subset \{1,\ldots,t\}$, $J\neq\emptyset$.
\end{definition}
Back to matchable subspaces setting, the possibility of matching given bases of $A$ and $B$ is reformulated in terms of the weak linear $n$-intersection property as   follows:
\begin{theorem}\label{t3.5}
Let $K,\,L,\,A,\,B$ and $A_b$ as Theorem \ref{t3.2}. Let $\mathcal{A}=\{a_1,\ldots,a_n\}$ and $\mathcal{B}=\{b_1,\ldots,b_n\}$ be bases for $A$ and $B$, respectively.  Then the family $\left\{A_{b_1},\ldots,A_{b_n}\right\}$ of subspaces of $A$ has the weak linear $n$-intersection property provided $\mathcal A$ is matched to $\mathcal B$.
\end{theorem}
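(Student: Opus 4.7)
The plan is to verify the two conditions required by the weak linear $n$-intersection property for the family $\{A_{b_1}, \ldots, A_{b_n}\}$: first, that each $A_{b_i}$ is a $K$-subspace of $A$ of dimension at most $n-1$; second, the intersection bound $\dim \bigcap_{i \in J} A_{b_i} \leq n-\#J$ for every non-empty $J \subset \{1, \ldots, n\}$. Both should fall out of material already developed, so the proof will essentially be a translation of Theorem~\ref{t3.2} into the new framework.

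For the per-subspace dimension bound, I would use the matching hypothesis, which gives the inclusion $a_i^{-1}A \cap B \subset \langle b_1, \ldots, \hat b_i, \ldots, b_n\rangle$. Since $b_i$ is linearly independent from the remaining basis vectors of $\mathcal{B}$, this forces $b_i \notin a_i^{-1}A$, hence $a_i b_i \notin A$ and $a_i \notin A_{b_i}$. Because $a_i \in A$, it follows that $A_{b_i}$ is a proper $K$-subspace of $A$, so $\dim_K A_{b_i} \leq n-1$.

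For the intersection bound, I would invoke Theorem~\ref{t3.2} directly: the opening phrase ``Let $K, L, A, B$ and $A_b$ as Theorem~\ref{t3.2}'' imports the setup of that theorem, in particular the hypothesis that $A$ is matched to $B$, so applying Theorem~\ref{t3.2} with the ordered basis $\mathcal{B}$ yields $\dim \bigcap_{i \in J} A_{b_i} \leq n-\#J$ for every non-empty $J$. Together with the bound from the previous step, this completes the verification. The main subtlety worth flagging is that the argument really does rely on the stronger hypothesis ``$A$ matched to $B$'' rather than merely on ``$\mathcal{A}$ matched to $\mathcal{B}$''; the hardest scenario to handle, if one wished to work from only the latter, would be to extend a basis $\{c_1, \ldots, c_t\}$ of $\bigcap_{i \in J} A_{b_i}$ to a basis of $A$ and then apply Proposition~\ref{p3.1} to that extension, since Proposition~\ref{p3.1} requires the extended basis itself to be matchable to some basis of $B$ -- a guarantee supplied only by the full assumption that $A$ is matched to $B$.
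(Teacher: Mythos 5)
Your proposal is correct and matches the paper's (implicit) argument: the paper states Theorem~\ref{t3.5} without a written proof, intending it as an immediate combination of Theorem~\ref{t3.2} (which supplies the bound $\dim\bigcap_{i\in J}A_{b_i}\leq n-\#J$ from the imported hypothesis that $A$ is matched to $B$) with the remark that follows (which supplies $a_i\notin A_{b_i}$, hence $\dim A_{b_i}\leq n-1$, from the hypothesis that $\mathcal{A}$ is matched to $\mathcal{B}$). Your observation about which of the two matching hypotheses each half of the argument actually uses is accurate and consistent with how the paper's statement is phrased.
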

\begin{remark}
Note  that since $\mathcal A$ is matched to $\mathcal B$, then $a_i\not\in A_{b_i}$, for all $1\leq i\leq n$. This implies  $\dim A_{b_i}\leq n-1$, for all $1\leq i\leq n$.
\end{remark}
The interested reader is encouraged to see \cite[Section 5.3]{9n} for more results on the $m$-intersection family which is used to study the sparse basis problem. See also \cite{1}.

\section*{The Weak $m$-intersection Property  and Matchable Subsets}
In contrast to precedent routine that results first are proven in the group setting and then are generalized to field extensions, we investigate the analogue of Theorem \ref{t3.5} in groups.  We begin with the following lemma which is the analogue of Proposition \ref{p3.1} in groups. 
\begin{lemma}\label{l3.7}
Let $G$ be an abelian group and $A=\{a_1,\ldots,a_n\}$, $B=\{b_1,\ldots,b_n\}$ be two subsets of $G$. Then $A$ is matched to $B$ if and only if
\[\#\underset{i\in J}{\bigcap}\left((A-a_i)\cap B\right)\leq n-\#J,\]
for all non-empty $J\subset \{1,\ldots,n\}$. 
\end{lemma}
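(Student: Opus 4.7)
The plan is to recognize this as essentially Hall's marriage theorem in disguise, since the condition $a_i+\varphi(a_i)\notin A$ for a bijection $\varphi:A\to B$ is exactly a system-of-distinct-representatives (SDR) condition on the family of ``allowed partners'' of each $a_i$. Concretely, for each $i\in\{1,\ldots,n\}$ I would set
\[
S_i=\{b\in B:\,a_i+b\notin A\}=B\setminus\bigl((A-a_i)\cap B\bigr),
\]
so that a matching from $A$ to $B$ is precisely a bijection $\sigma\in\mathcal{S}_n$ with $b_{\sigma(i)}\in S_i$ for every $i$, i.e., an SDR for the family $\{S_1,\ldots,S_n\}$.

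The forward direction ($A$ matched to $B$ implies the intersection bound) is then immediate from Hall: if an SDR exists, then for every non-empty $J\subset\{1,\ldots,n\}$ one has $\#\bigcup_{i\in J}S_i\geq\#J$. Taking complements inside the finite set $B$ (which has cardinality $n$) yields
\[
\#\bigcap_{i\in J}(B\setminus S_i)=\#B-\#\bigcup_{i\in J}S_i\leq n-\#J,
\]
and $B\setminus S_i=(A-a_i)\cap B$ by construction, giving exactly the stated inequality.

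For the converse, I would invoke Hall's theorem in the other direction: the inequalities $\#\bigcap_{i\in J}((A-a_i)\cap B)\leq n-\#J$ translate, by the same complementation inside $B$, into the Hall condition $\#\bigcup_{i\in J}S_i\geq\#J$ for every non-empty $J$, which guarantees the existence of an SDR $\sigma$ and hence of a matching $\varphi(a_i)=b_{\sigma(i)}$ from $A$ to $B$.

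I do not expect any real obstacle; the argument is a bookkeeping exercise once one notes that the ``forbidden partners'' of $a_i$ inside $B$ are precisely the elements of $(A-a_i)\cap B$, so that the Hall condition on the complementary ``allowed'' sets is exactly the displayed upper bound. The only small care needed is to verify that the reformulation really produces a \emph{bijection} (so that $\varphi$ is a matching in the sense of the paper and not merely a selection), which follows from the fact that an SDR for $n$ sets indexed by $n$ elements is automatically a bijection onto its image inside $B$, and that image must have size $n=\#B$.
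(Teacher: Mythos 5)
Your proposal is correct and follows essentially the same route as the paper: both directions reduce to Hall's marriage theorem applied to the complementary sets $S_i=B\setminus\bigl((A-a_i)\cap B\bigr)$, with the forward direction being the easy containment $\bigcap_{i\in J}\bigl((A-a_i)\cap B\bigr)\subset B\setminus\{\varphi(a_i):\,i\in J\}$ and the converse being Hall's condition after complementation in $B$. No gaps; this matches the paper's proof in substance and detail.
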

In our proof, we will use the marriage theorem of Hall \cite{14n} which states that if $E$ is a set and $\mathcal{E}=\{E_i\}_{i=1}^n$ is a family of finite subsets of $E$, then $\mathcal{E}$ admits a system of distinct representatives if and only if 
\[\#\underset{i\in J}{\bigcup} E_i\geq\#J,\]
for all non-empty subsets $J\subset\{1,\ldots,n\}$.
\begin{proof}[Proof of Lemma \ref{l3.7}]
$\Rightarrow$ Assume first that $A$ is matched to $B$. Then there exists a bijection $\varphi:A\to B$ such that $a+\varphi(a)\not\in A$. This implies 
\[(A-a_i)\cap B\subset B\setminus \{\varphi(a_i)\},\]
for all $1\leq i\leq n$. Therefore, for any non-empty $J\subset \{1,\ldots,n\}$ we have 
\[\underset{i\in J}{\bigcap}\left((A-a_i)\cap B\right)\subset \underset{i\in J}{\bigcap}\left(B\setminus\{\varphi(a_i)\}\right)=B\setminus\{\varphi(a_i):\, i\in J\}.\]
It follows that $\#\underset{i\in J}{\bigcap}\left((A-a_i)\cap B\right)\leq n-\#J$, as claimed.\\
$\Leftarrow$ Assume that for all non-empty $J\subset\{1,\ldots,n\}$, we have 
\[\#\underset{i\in J}{\bigcap}\left((A-a_i)\cap B\right)\leq n-\#J.\]
Taking the complement in $B$, we get 
\[\#\underset{i\in J}{\bigcup}\left((A-a_i)\cap B\right)^c\geq \#J.\]
Using the marriage theorem of Hall, the above bounds imply the existence of a permutation $\sigma\in\mathcal{S}_n$ such that $b_{\sigma(i)}\in\left((A-a_i)\cap B\right)^c$. In other words $a_i+b_{\sigma(i)}\not\in A$. This induces  the bijection $\varphi:A\to B$ via the rule $\varphi(a_i)=b_{\sigma(i)}$ is a matching, as desired.
\end{proof}
The following observation is  analogous to Theorem \ref{t3.2} in the group setting. 
\begin{proposition}\label{p3.8}
Let $G$, $A$ and $B$ be as Lemma \ref{l3.7}. For any $1\leq i\leq n$, define $A_{b_i}=\{a\in A:\, a+b_i\in A\}$. Let $A$ be matched to $B$. Then 
\begin{align}\label{eq8}
\#\underset{i\in J}{\bigcap} A_{b_i}\leq n-\#J,
\end{align}
for any non-empty $J\subset \{1,\ldots,n\}$.
\end{proposition}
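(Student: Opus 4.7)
The plan is to mirror the proof of Theorem \ref{t3.2}, simply replacing the dimension inequality with a cardinality count and Proposition \ref{p3.1} with its group-theoretic analogue, Lemma \ref{l3.7}. The starting observation is the tautological equivalence: for any $a\in A$, one has $a\in A_{b_i}$ iff $a+b_i\in A$ iff $b_i\in (A-a)\cap B$ (the last intersection makes sense because $b_i\in B$). This bridges the sets $A_{b_i}$ that appear in the statement with the translates $(A-a_k)\cap B$ that appear in Lemma \ref{l3.7}.

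Next, fix a non-empty $J\subset\{1,\ldots,n\}$ and set $t=\#\bigcap_{i\in J}A_{b_i}$. If $t=0$ the bound \eqref{eq8} is trivial, so assume $t\ge 1$ and list the $t$ elements of this intersection as $a_{i_1},\ldots,a_{i_t}$, where $\{i_1,\ldots,i_t\}\subset\{1,\ldots,n\}$ is a set of $t$ distinct indices. For every $k\in\{1,\ldots,t\}$ and every $i\in J$, membership $a_{i_k}\in A_{b_i}$ yields $a_{i_k}+b_i\in A$, hence $b_i\in (A-a_{i_k})\cap B$. Letting $k$ and $i$ vary independently gives the containment
\[
\{b_i:\, i\in J\}\ \subset\ \bigcap_{k=1}^{t}\bigl((A-a_{i_k})\cap B\bigr),
\]
so $\#J\leq \#\bigcap_{k=1}^{t}\bigl((A-a_{i_k})\cap B\bigr)$.

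To finish, apply Lemma \ref{l3.7}, using the hypothesis that $A$ is matched to $B$, to the index set $\{i_1,\ldots,i_t\}\subset\{1,\ldots,n\}$ of cardinality $t$:
\[
\#\bigcap_{k=1}^{t}\bigl((A-a_{i_k})\cap B\bigr)\ \leq\ n-t.
\]
Combining the two inequalities yields $\#J\leq n-t$, that is, $t=\#\bigcap_{i\in J}A_{b_i}\leq n-\#J$, which is exactly \eqref{eq8}.

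Since the proof is a direct transcription of the Theorem \ref{t3.2} argument, no serious obstacle is expected; the only point requiring a little care is the distinction between the two index sets living inside $\{1,\ldots,n\}$, namely $J$ (which selects the subset of the $b_i$'s whose $A_{b_i}$ are intersected) and $\{i_1,\ldots,i_t\}$ (which records the chosen elements of this intersection as elements of $A$). Keeping these roles straight is what allows the clean application of Lemma \ref{l3.7} in the last step.
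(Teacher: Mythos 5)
Your proof is correct and follows essentially the same route as the paper's: reduce membership in $\bigcap_{i\in J}A_{b_i}$ to the statement $b_i\in(A-a_k)\cap B$, deduce $\#J\leq\#\bigcap_k\left((A-a_k)\cap B\right)$, and then invoke Lemma \ref{l3.7} to bound that intersection by $n-t$. Your explicit bookkeeping of the index set $\{i_1,\ldots,i_t\}$ is a slightly more careful rendering of the paper's ``without loss of generality'' relabeling, but the argument is the same.
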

\begin{proof}
If $\underset{i\in J}{\bigcap}A_{b_i}=\emptyset$, then \eqref{eq8} holds. Assume that $\underset{i\in J}{\bigcap}A_{b_i}$ is non-empty and without loss of generality assume that $\underset{i\in J}{\bigcap}A_{b_i}=\{a_1,\ldots,a_t\}$. As $a_k\in \underset{i\in J}{\bigcap}A_{b_i}$, for all $1\leq k\leq t$, then $a_k+b_i\in A$, for all $i\in J$ and so $b_i\in \overset{t}{\underset{k=1}{\bigcap}}\left((A-a_k)\cap B\right)$. It follows that  $\#J\leq \#\overset{t}{\underset{k=1}{\bigcap}}\left((A-a_k)\cap B\right)$. Since $A$ is matched to $B$, then by Lemma \ref{l3.7}  we have $\#\overset{t}{\underset{k=1}{\bigcap}}\left((A-a_k)\cap B\right)\leq n-t$. Thus we totally obtain $t\leq n-\#J$ and so \eqref{eq8} is the case, as claimed. 
\end{proof}
As an easy consequence, we get the following result.
\begin{corollary}
Let $G,\, A,\,B$ and  $A_{b_i}$ be as Proposition \ref{p3.8}. Then the family $\{A_{b_1},\ldots,A_{b_n}\}$ of subsets of $A$ has the weak  linear $n$-intersection property provided that $A$ is matched to $B$.
\end{corollary}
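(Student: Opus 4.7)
The plan is to deduce the corollary essentially as an immediate repackaging of Proposition \ref{p3.8} into the language of the weak $n$-intersection property. After fixing the indexing $A=\{a_1,\ldots,a_n\}$, each set $A_{b_i}\subset A$ may be identified with the subset $\{k\in\{1,\ldots,n\}:\, a_k\in A_{b_i}\}$ of $\{1,\ldots,n\}$, so that the family $\{A_{b_1},\ldots,A_{b_n}\}$ becomes a family of subsets of a ground set of size $n$, the format required by the definition.

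To apply that definition with parameter $m=n$, I would verify two conditions. First, each $A_{b_i}$ must have cardinality at most $n-1$. This is the special case $J=\{i\}$ of Proposition \ref{p3.8}, giving $\#A_{b_i}\leq n-\#\{i\}=n-1$. Alternatively, if $\varphi:A\to B$ is any matching witnessing that $A$ is matched to $B$, then $a:=\varphi^{-1}(b_i)$ satisfies $a+b_i=a+\varphi(a)\notin A$, so $a\in A\setminus A_{b_i}$ and hence $\#A_{b_i}\leq n-1$. Second, the intersection inequality
\[\#\underset{i\in J}{\bigcap}A_{b_i}\leq n-\#J \quad\text{for every nonempty } J\subset\{1,\ldots,n\}\]
is exactly the conclusion of Proposition \ref{p3.8}. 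Combining these two observations shows that $\{A_{b_1},\ldots,A_{b_n}\}$ satisfies the weak $n$-intersection property.

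A minor point of interpretation: the statement refers to the \emph{weak linear $n$-intersection property}, whose definition in the paper applies to subspaces of a vector space, whereas in the present group-theoretic setting the $A_{b_i}$ are merely subsets of $A$. I would read the corollary as asserting the (set-theoretic) weak $n$-intersection property, which is the evident analogue formulated earlier for subsets of $\{1,\ldots,n\}$. There is no genuine mathematical obstacle in this proof: all the work is already done by Proposition \ref{p3.8}, and the remainder is only bookkeeping to align the hypotheses of the intersection-property definition with what that proposition supplies.
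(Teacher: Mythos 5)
Your proof is correct and matches what the paper intends: the paper states this corollary without proof as ``an easy consequence'' of Proposition \ref{p3.8}, and your argument---the intersection bound is exactly that proposition, with the case $J=\{i\}$ supplying the cardinality constraint $\#A_{b_i}\leq n-1$ required by the definition---is precisely the intended bookkeeping. Your remark that ``linear'' should be read set-theoretically here is also a fair observation about the paper's wording.
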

\section*{A Few Notations}
We shall use the following standard notation. We denote 
\begin{align*}
A^*=\{\varphi:A\to K:\; \varphi \text{ is linear }\}
\end{align*}
the {\it dual} of $A$. Moreover for any subspace $E$ of $A$, we denote 
\begin{align*}
E^\perp=\{\varphi\in A^*:\; E\subset \ker\varphi\}
\end{align*}
the {\it orthogonal} of $E$ in $A^*$. We will also use the fact that $E\oplus E^\perp=A$. See \cite{13nn} for more details.\\[5mm]
Let $\mathcal{E}=\{E_i\}_{i=1}^n$ be  a family of subspaces of a finite-dimensional $K$-vector space $E$. A {\it free transversal} for  $\mathcal{E}$ is a linearly independent set of vectors $\{x_1,\ldots,x_n\}$ in $E$ satisfying $x_i\in E$, for all $i=1,\ldots,n$. It is shown by Rado in \cite{11} that  $\mathcal E$ admits a free transversal if and only if
\begin{align}
\dim\underset{i\in J}{\sum} E_i\geq\# J,
\end{align}
for any non-empty  $J\subset \{1,\ldots,n\}$. This result is actually very similar the marriage theorem of Hall.  
\section*{Back to the Linear Setting}
Using Rado's theorem we have the following observation:
\begin{proposition}
Let $E$ be an $n$-dimensional vector space over $K$ and $\mathcal{E}=\{E_1,\ldots,E_k\}$ be a family of vector subspaces of $E$ such that for any non-empty $J\subset \{1,\ldots,k\}$ we have 
\begin{align}
\dim \underset{i\in J}{\bigcap}E_i\leq n-\#J.
\end{align}
Then there exist vector subspaces $\tilde{E}_i$ of $E$ such that $E_i\subset \tilde{E}_i$ and for any non-empty $J\subset \{1,\ldots,k\}$, we have
\begin{align}
\dim \underset{i\in J}{\bigcap}\tilde{E}_i=n-\# J.
\end{align}
\end{proposition}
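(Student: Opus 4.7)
The plan is to dualize and apply Rado's theorem to the family of orthogonals. Recall the standard identities in finite dimension: $\dim F^\perp = n - \dim F$ for any subspace $F \subset E$, and $\bigl(\bigcap_{i\in J} F_i\bigr)^\perp = \sum_{i\in J} F_i^\perp$ for any finite family of subspaces. Applying these to the hypothesis, the inequality $\dim \bigcap_{i\in J} E_i \leq n - \#J$ translates equivalently into
\[ \dim \sum_{i\in J} E_i^\perp \geq \#J \]
for every non-empty $J \subset \{1,\ldots,k\}$.

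Next I would invoke Rado's theorem (stated just above in the excerpt) for the family $\{E_i^\perp\}_{i=1}^k$ of subspaces of $E^*$. It produces a free transversal, namely a linearly independent set $\{\varphi_1,\ldots,\varphi_k\} \subset E^*$ with $\varphi_i \in E_i^\perp$ for each $i$. Define the hyperplane $\tilde{E}_i := \ker \varphi_i \subset E$. Since $\varphi_i$ annihilates $E_i$, we have $E_i \subset \tilde{E}_i$, as required for the containment.

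To verify the exact dimension equality, fix a non-empty $J \subset \{1,\ldots,k\}$ and consider the evaluation map
\[ \Phi_J: E \to K^{\#J}, \qquad \Phi_J(y) = (\varphi_i(y))_{i\in J}. \]
Its kernel is precisely $\bigcap_{i\in J} \tilde{E}_i$. Because $\{\varphi_i\}_{i\in J}$ is linearly independent in $E^*$, the map $\Phi_J$ is surjective: any nontrivial linear relation among coordinate functionals on its image would pull back to a nontrivial linear relation among the $\varphi_i$. By rank--nullity, $\dim \bigcap_{i\in J} \tilde{E}_i = n - \#J$, which upgrades the inequality in the hypothesis to an equality.

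The only real technical step is the dualization; once the intersection inequalities have been rewritten as a Rado-type lower bound on sums of annihilators, Rado's theorem furnishes the transversal, and the fact that we cut each $\tilde{E}_i$ out by a single functional from the transversal forces the intersection dimension to drop by exactly $\#J$. I do not anticipate a serious obstacle beyond careful bookkeeping of the duality, together with the implicit observation that $k \le n$ (obtained from the hypothesis with $J = \{1,\ldots,k\}$), which is needed for the free transversal of size $k$ to exist inside the $n$-dimensional space $E^*$.
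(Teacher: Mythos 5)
Your proof is correct and follows essentially the same route as the paper: dualize the intersection bound into $\dim\sum_{i\in J}E_i^\perp\geq\#J$, apply Rado's theorem to $\{E_i^\perp\}$ to get a free transversal $\{\varphi_1,\ldots,\varphi_k\}$, and set $\tilde{E}_i=\ker\varphi_i=\langle\varphi_i\rangle^\perp$. Your version is in fact slightly more careful than the paper's, since you explicitly check the containment $E_i\subset\tilde{E}_i$ and justify the exact dimension count via rank--nullity, both of which the paper leaves implicit.
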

\begin{proof}
Since $\dim \underset{i\in J}{\bigcap}E_i\leq n-\# J$, we have $\dim \underset{i\in J}{\sum}E_i^\perp\geq\# J$. Using Rado's theorem the family $\tilde{\mathcal{E}}=\{E_1^\perp,\ldots,E_k^\perp\}$ admits a free transversal. Let $(a_1,\ldots,a_k)$ be a free transversal for $\tilde{\mathcal{E}}$. Then $\dim \underset{i\in J}{\sum}\langle a_i\rangle=\# J$. Setting $\tilde{E}_i=\langle a_i\rangle^\perp$  we get  $\dim \underset{i\in J}{\bigcap}\tilde{E}_i=n-\# J$.
\end{proof}
Combining Theorem \ref{t3.2} with the above observation yields the following corollary:
\begin{corollary}
Let $K,L,A,B,\mathcal{B}$ and $A_{b_i}$ be as Theorem \ref{t3.2}. Then, there exist subspaces $\tilde{A}_i$ of $A$ such that $A_{b_i}\subset \tilde{A}_i$ and $\dim \underset{i\in J}{\bigcap}\tilde{A}_i=n-\# J$, $J\subset\{1,\ldots,n\}$ and $J\neq\emptyset$.
\end{corollary}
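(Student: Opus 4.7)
The plan is to observe that this corollary is a direct consequence of the preceding Proposition, with Theorem \ref{t3.2} furnishing exactly the dimension hypothesis that Proposition requires. So the entire proof amounts to lining up the hypotheses and quoting the two results in order.

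Concretely, I would begin by noting that $A$ is a finite-dimensional $K$-vector space of dimension $n$, and that by the hypothesis inherited from Theorem \ref{t3.2} the subspace $A$ is matched to $B$ in $L$. Theorem \ref{t3.2} then yields, for every non-empty $J\subset\{1,\ldots,n\}$, the inequality
\[\dim\underset{i\in J}{\bigcap} A_{b_i}\leq n-\#J.\]
This is precisely the hypothesis of the preceding Proposition, applied to the family $\mathcal{E}=\{A_{b_1},\ldots,A_{b_n}\}$ of subspaces of the $n$-dimensional space $E=A$ (here taking $k=n$). The Proposition then produces subspaces $\tilde{A}_1,\ldots,\tilde{A}_n$ of $A$ with $A_{b_i}\subset\tilde{A}_i$ and
\[\dim\underset{i\in J}{\bigcap}\tilde{A}_i=n-\#J\]
for every non-empty $J\subset\{1,\ldots,n\}$, which is exactly the conclusion to be proved.

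There is no real obstacle here beyond bookkeeping: we only need to check that the index ranges match up ($k=n$ in the Proposition corresponds to the number of basis vectors $b_1,\ldots,b_n$ of $B$) and that the ambient space for the intersection calculus is $A$ itself. Both checks are immediate from the setup of Theorem \ref{t3.2}. The substantive content of the corollary therefore lies not in its proof but in the previous two results, whose combination upgrades the inequality $\dim\bigcap_{i\in J}A_{b_i}\leq n-\#J$ into an equality for a suitable enlargement of the $A_{b_i}$.
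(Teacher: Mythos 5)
Your proposal is correct and matches the paper's own (implicit) argument exactly: the paper states the corollary as the immediate combination of Theorem \ref{t3.2}, which supplies the inequality $\dim\bigcap_{i\in J}A_{b_i}\leq n-\#J$, with the preceding Rado-based Proposition applied to the family $\{A_{b_1},\ldots,A_{b_n}\}$ inside $E=A$. Nothing further is needed.
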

With the notion  of locally matched subspaces at hand, we now introduce weakly locally matched subspaces.
\begin{definition}
Let $K\subset L$ be a field extension and $A, B$ be two $n$-dimensional $K$-subspaces of $L$ with $n>1$. Let $\tilde{\mathcal{A}}$ and $\tilde{\mathcal{B}}$ be two $m$-dimensional $K$-subspaces of $A$ and $B$, respectively. We say that $\tilde{A}$ is weakly $A$-matched to $\tilde{B}$ if there exist ordered bases $\{a_1,\ldots,a_m\}$ and  $\{b_1,\ldots,b_m\}$ of $\tilde{A}$ and $\tilde{B}$,  respectively, such that   $a_ib_i\not\in A$, for $i=1,\ldots,m$. We also say that $A$ is weakly locally matched to $B$ if for any  intermediate subfield $K\subset H\subsetneqq L$ with $H\cap B\neq\{0\}$ and $aH\subset A$, for some $a\in A$, one can find a subspace $\tilde{A}$ of $A$ such that $\tilde{A}$ is weakly  $A$-matched to $H\cap B$. 
\end{definition}
Combining Theorem \ref{t3.2} with a construction due to Eliahou-Lecouvey \cite{7} we arrive at the following result:
\begin{theorem}\label{t3.6}
Let $K,\, L,\, A$ and $B$ be as in  Theorem \ref{t3.2}. Then $A$ is weakly locally matched to $B$ provided $A$ is matched to $B$.
\end{theorem}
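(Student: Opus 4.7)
The plan is to reduce the construction of a weak local match at an intermediate subfield $H$ to a transversal problem that is solved by Rado's theorem applied to the orthogonals in $A^*$ of the subspaces $A_{b_i}$ appearing in Theorem \ref{t3.2}.

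First, I fix an arbitrary intermediate subfield $K\subset H\subsetneqq L$ with $H\cap B\neq\{0\}$, set $m=\dim_K(H\cap B)$, choose an ordered basis $\{b_1,\ldots,b_m\}$ of $H\cap B$, and extend it to an ordered basis $\mathcal{B}=\{b_1,\ldots,b_n\}$ of $B$. Since $A$ is matched to $B$, Theorem \ref{t3.2} applied to $\mathcal{B}$ gives $\dim\underset{i\in J}{\bigcap}A_{b_i}\leq n-\#J$ for every non-empty $J\subset\{1,\ldots,n\}$. Restricting to $J\subset\{1,\ldots,m\}$ and passing to orthogonals in $A^*$ by means of the relation $\dim E+\dim E^\perp=n$, this turns into $\dim\underset{i\in J}{\sum}A_{b_i}^\perp\geq\#J$. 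Rado's theorem (quoted in the excerpt) then produces a free transversal of $\{A_{b_1}^\perp,\ldots,A_{b_m}^\perp\}$: linearly independent functionals $\varphi_1,\ldots,\varphi_m\in A^*$ with $\varphi_i\in A_{b_i}^\perp$.

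Finally, I extend $\varphi_1,\ldots,\varphi_m$ to a basis of $A^*$ and let $\{a_1,\ldots,a_n\}$ be the dual basis of $A$, so that $\varphi_i(a_j)=\delta_{ij}$. Then $\varphi_i(a_i)=1$ while $\varphi_i$ vanishes on $A_{b_i}$, hence $a_i\notin A_{b_i}$, i.e., $a_ib_i\notin A$, for each $1\leq i\leq m$, and the vectors $a_1,\ldots,a_m$ are linearly independent by construction. Setting $\tilde{A}=\langle a_1,\ldots,a_m\rangle$, the ordered bases $\{a_i\}_{i=1}^m$ of $\tilde{A}$ and $\{b_i\}_{i=1}^m$ of $H\cap B$ witness that $\tilde{A}$ is weakly $A$-matched to $H\cap B$. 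As $H$ was arbitrary, this establishes that $A$ is weakly locally matched to $B$.

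The delicate point is to ensure that the Rado hypothesis holds uniformly over all subsets $J\subset\{1,\ldots,m\}$; this is precisely why one must extend the chosen basis of $H\cap B$ to a full basis of $B$ before invoking Theorem \ref{t3.2}, rather than trying to work inside $H\cap B$ alone. I also note that the additional hypothesis ``$aH\subset A$ for some $a\in A$'' from the definition of weakly locally matched plays no role in the argument, so the conclusion is in fact slightly stronger than asserted.
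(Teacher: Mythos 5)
Your proof is correct and follows essentially the same route as the paper's: extend a basis of $H\cap B$ to a basis of $B$, apply Theorem \ref{t3.2}, pass to orthogonals in $A^*$, invoke Rado's theorem to get a free transversal of the $A_{b_i}^\perp$, and take the dual basis to produce $\tilde{A}=\langle a_1,\ldots,a_m\rangle$. The only cosmetic difference is that you apply Rado to the first $m$ orthogonals and then extend to a basis of $A^*$, whereas the paper applies it to all $n$ at once; your closing observation that the hypothesis $aH\subset A$ is never used also holds for the paper's argument.
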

\begin{proof}
  Assume that $H$ is an intermediate subfield $K\subset H\subsetneqq L$ satisfying $H\cap B\neq0$ and $aH\subset A$, for some $a\in A$.   We show that there exists a subspace $\tilde{A}$ of $A$ such that $\tilde{A}$ is weakly $A$-matched to $H\cap B$.  Let $\tilde{\mathcal{B}}=\{b_1,\ldots,b_m\}$  be an ordered  basis for $H\cap B$ and extend it to the ordered  basis $\mathcal{B}=\{b_1,\ldots,b_n\}$ for $B$. Using Theorem  \ref{t3.2}  we have 
\begin{align*}
\dim \underset{i\in j}{\bigcap} A_{b_i}\leq n-\#J
\end{align*}
for all non-empty  $J\subset \{1,\ldots,n\}$. Taking the orthogonal in the dual space $A^*$, we get 
\begin{align*}
\dim\left(\underset{i\in J}{\bigcap} A_{b_i}\right)^\perp\geq \# J,
\end{align*}
and hence 
\begin{align*}
\dim \underset{i\in j}{\sum}A_{b_i}^\perp \geq \#J.
\end{align*}
By the Rado's theorem, the above dimensions bound implies the existence of a free transversal 
\[\varphi_1,\ldots,\varphi_n\in A^*\]
for the system of subspaces $\left\{A_{b_i}^\perp\right\}_{i=1}^n$. In other words, we have $\varphi_i\in A_{b_i}^\perp$ for $1\leq i\leq n$, and $\{\varphi_1,\ldots,\varphi_n\}$ is  an ordered  basis of $A^*$. Let $\mathcal{A}=\{a_1,\ldots,a_n\}$ be the unique ordered basis of $A$ whose dual basis $\mathcal{A}^*$ equals $\{\varphi_1,\ldots,\varphi_n\}$,  i.e. such that $a_i^*=\varphi_i$ for all $i$. Then we have
\begin{align}\label{eqn8}
a_i^*(A_{b_i})=\{0\}.
\end{align}
Set $\tilde{A}:=\langle a_1,\ldots,a_m\rangle$, the $K$-subspace of $L$ spanned by $a_1,\ldots,a_m$. By \eqref{eqn8}, we have $a_ib_i\not\in A$, for all $i=1,\ldots,m$. Then $\tilde{A}$ is  weakly  $A$-matched to $H\cap B$. This follows that $A$ is weakly locally matched to $B$.
\end{proof}
\begin{remark}
We hope that the techniques presented in the proof of Theorem  \ref{t3.2} and Theorem \ref{t3.6} can be used to solve the main problem of local matchings which states that matchable subspaces are locally matchable \cite[Remark 5.6]{3}
\end{remark}
 
\section{A Dimension Criterion For Primitive Matchable Subspaces}
It is shown in \cite{9} that for a non-trivial finite cyclic group $G$ and finite non-empty subsets $A$, $B$ of it  with $\# A=\# B$ there exists a matching from $A$ to $B$ if every element of $B$ is a generator of $G$. The linear analogue of this result is given in \cite{3} as the following theorem. 
\begin{theorem}\label{t4.1}
Let $K\subset L$ be a separable field extension and $A$ and $B$ be two $n$-dimensional $K$-subspaces in $L$ with $n>1$ and $B$ is a primitive $K$-subspace of $L$. Then $A$ is matched to $B$.
\end{theorem}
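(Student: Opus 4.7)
The plan is to apply Proposition \ref{p3.1}: I fix an arbitrary ordered basis $\mathcal{A}=\{a_1,\ldots,a_n\}$ of $A$, set $W_J := \langle a_i : i\in J\rangle_K$ for any non-empty $J \subseteq \{1,\ldots,n\}$, and aim to verify
\begin{align*}
\dim_K \bigcap_{i\in J}(a_i^{-1}A\cap B) \leq n - \#J.
\end{align*}
Since $\mathcal{A}$ is arbitrary, this will show that $A$ is matched to $B$.

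The key idea is to enlarge $V_J := \bigcap_{i\in J}(a_i^{-1}A\cap B)$ to the auxiliary subspace $V^*_J := \bigcap_{i\in J} a_i^{-1}A = \{x\in L : xW_J\subseteq A\}$, which always contains $1$ (because $a_i\in A$ for $i\in J$) and hence contains the whole subfield $K$. One has $V^*_J W_J \subseteq A$ and $V_J = V^*_J \cap B$. The point of this enlargement is that $V^*_J$ carries the ``extra room'' coming from $1\notin B$, which will absorb the typical one-unit slack in linear Kneser-type bounds.

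Next I would invoke the linear Kneser (Hou--Leung--Xiang) inequality for the separable extension $K\subseteq L$: writing $F := \{x\in L : xV^*_J W_J \subseteq V^*_J W_J\}$ for the stabilizer of $V^*_J W_J$ (a subfield of $L$ containing $K$), one has $\dim_K V^*_J W_J \geq \dim_K V^*_J + \#J - \dim_K F$. Since $V^*_J W_J \subseteq A$, this yields $\dim V^*_J \leq n-\#J+\dim F$. A direct check shows $V^*_J$ is itself $F$-stable: for $f\in F$ and $v\in V^*_J$, one has $fvW_J \subseteq FV^*_J W_J \subseteq V^*_J W_J \subseteq A$, so $fv\in V^*_J$. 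Combined with $1\in V^*_J$ this gives $F\subseteq V^*_J$. The primitivity of $B$ now enters decisively: any nonzero element of $V_J\subseteq B$ generates $L$ over $K$, so $V_J\cap F=\{0\}$ whenever $F\subsetneq L$. Hence $V_J\oplus F\subseteq V^*_J$, and we conclude
\begin{align*}
\dim V_J \leq \dim V^*_J - \dim F \leq n-\#J.
\end{align*}

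The main remaining obstacle is excluding the edge case $F=L$, in which the primitivity argument becomes vacuous. If $F=L$, then $V^*_J W_J$ would be an $L$-stable $K$-subspace of $L$ containing the nonzero space $W_J$, forcing $V^*_J W_J=L$; combined with $V^*_J W_J\subseteq A$, this gives $A=L$. But $B$ being an $n$-dimensional primitive subspace with $n>1$ forces $\dim_K L>n$ (otherwise $B=L$ and $1\in B$, contradicting the primitivity of $B$), hence $A\subsetneq L$. This rules out $F=L$ and closes the argument.
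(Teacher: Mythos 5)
The paper does not actually prove Theorem~\ref{t4.1}: it is quoted verbatim from \cite{3} and used as a black box, so there is no in-paper argument to compare yours against. Judged on its own, your proof is correct and, to my knowledge, gives a clean self-contained route. The reduction to the dimension criterion of Proposition~\ref{p3.1} for an arbitrary basis $\mathcal{A}$ is the right opening move; the enlargement of $V_J=\bigcap_{i\in J}(a_i^{-1}A\cap B)$ to $V_J^*=\bigcap_{i\in J}a_i^{-1}A$ (which contains $1$, hence $K$) is exactly what lets the stabilizer $F$ from the Hou--Leung--Xiang linear Kneser inequality be absorbed: your verification that $F$ stabilizes $V_J^*$ and hence $F\subseteq V_J^*$, together with $V_J\cap F=\{0\}$ forced by primitivity of $B$ when $F\subsetneq L$, gives $\dim V_J\leq\dim V_J^*-\dim F\leq n-\#J$, and the degenerate case $F=L$ is correctly excluded by the observation that $n>1$ and primitivity force $\dim_K L>n$ while $F=L$ would force $A=L$. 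Three small points worth tightening in a written version: (i) $V_J^*W_J$ should everywhere be read as its $K$-linear span, since the Kneser-type inequality and the stabilizer are statements about the span, not the product set; (ii) the Hou--Leung--Xiang theorem is an external ingredient that this paper never states, so it must be cited explicitly, and one should note that the blanket hypothesis ``separable'' is precisely the hypothesis (every algebraic element of $L$ over $K$ separable) under which that theorem holds; (iii) $F$ is finite-dimensional over $K$ because it stabilizes the nonzero finite-dimensional space $\langle V_J^*W_J\rangle\subseteq A$, which is what makes the subtraction $\dim V_J^*-\dim F$ meaningful. None of these is a gap; the argument stands.
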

Note that we say that $K\subset L$ is a {\it simple} extension if $L=K(\alpha)$, for some $\alpha\in L$. Also, if $B$ is a $K$-subspace of $L$ such that $K(b)=L$, for any $b\in B\setminus\{0\}$, we say that $B$ is a {\it primitive} $K$-subspace of $L$. 
Motivated by the above theorem, a natural question to ask is that how large can the primitive subspace $B$ be? We answer this question in Theorem \ref{t4.2}. We actually prove that $\dim B\leq[L:K]-n(K,L)$, where $n(K,L)$  denotes the largest degree of an intermediate  field extension $K\subset F\subsetneqq L$ over $K$. 
\begin{theorem}\label{t4.2}
Let $K$ be an infinite field and $K\subset L$ be a finite simple field extension. Then we have
\begin{align}\label{eq10}
m(K,L)=n-n(K,L),
\end{align}
where $n=[L:K]$, $n(K,L)=\max\big\{[F:K]:\,F$ is an intermediate  subfield of $K\subset L$ with $F\neq L\big\}$ and $m(K,L)=\max\big\{\dim  V:\, V\subset L$ is a 
$K$-subspace and $K(a)=L,$ for any $a\in V\setminus \{0\}\big\}$.
\end{theorem}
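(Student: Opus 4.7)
The plan is to establish the two inequalities $m(K,L) \leq n - n(K,L)$ and $m(K,L) \geq n - n(K,L)$ separately, with the substantive content concentrated in the lower bound.

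For the upper bound, I would fix a proper intermediate subfield $F$ achieving $[F:K] = n(K,L)$ and argue that every primitive $K$-subspace $V \subset L$ satisfies $V \cap F = \{0\}$: a nonzero $v \in V \cap F$ would give $K(v) \subseteq F \subsetneq L$, contradicting primitivity. Hence $\dim V \leq \dim L - \dim F = n - n(K,L)$, and taking the supremum over primitive $V$ yields $m(K,L) \leq n - n(K,L)$.

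For the reverse inequality, set $d = n - n(K,L)$; the goal is to exhibit a $d$-dimensional primitive $K$-subspace. Since $K \subset L$ is finite and simple, Steinitz's theorem guarantees that there are only finitely many intermediate subfields $F_1, \ldots, F_s$, of respective dimensions $m_i \leq n(K,L)$. A $K$-subspace $V$ is primitive precisely when $V \cap F_i = \{0\}$ for every $i$, so the task reduces to finding a $d$-dimensional $V$ that avoids every $F_i$ except at $0$. I would parametrize tuples $(v_1, \ldots, v_d) \in L^d$ by the coefficients $t_{jk}$ with respect to a fixed $K$-basis $e_1, \ldots, e_n$ of $L$. For each $i$, fix a basis $f_1^{(i)}, \ldots, f_{m_i}^{(i)}$ of $F_i$ and form the $(d+m_i) \times n$ matrix whose rows are the coordinates of $v_1, \ldots, v_d, f_1^{(i)}, \ldots, f_{m_i}^{(i)}$; since $d + m_i \leq n$, this matrix shape is sensible. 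The subspace $\langle v_1, \ldots, v_d\rangle$ meets $F_i$ nontrivially iff this matrix has rank strictly less than $d + m_i$, i.e., iff all its $(d+m_i) \times (d+m_i)$ minors vanish. Select one such minor to obtain a polynomial $P_i$ in the $t_{jk}$, whose nonvanishing certifies both that $v_1, \ldots, v_d$ are linearly independent and that $\langle v_1,\ldots,v_d\rangle \cap F_i = \{0\}$.

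The crux is showing that each $P_i$ is not identically zero. For this, I would extend $f_1^{(i)}, \ldots, f_{m_i}^{(i)}$ to a full $K$-basis of $L$ and assign the $v_j$ to be $d$ of the complementary basis vectors (possible because $n - m_i \geq d$); the resulting matrix then has full rank $d+m_i$, witnessing $P_i \not\equiv 0$. Because $K$ is infinite, the product $\prod_{i=1}^{s} P_i$ admits a nonvanishing evaluation at some $(t_{jk}) \in K^{dn}$, and any such evaluation produces a $d$-dimensional primitive $K$-subspace of $L$. The main obstacle is precisely the nontriviality of the polynomial obstructions $P_i$, which rests on the bound $d + m_i \leq n$; were $d$ any larger, some $F_i$ of near-maximal dimension would force every $d$-plane to meet it. The simplicity of the extension enters only through the finiteness of the family $\{F_i\}$, while the hypothesis that $K$ is infinite is essential for extracting a $K$-rational point from polynomial nonvanishing.
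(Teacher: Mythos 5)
Your proposal is correct, and both halves match the paper's logical skeleton: the upper bound $m(K,L)\leq n-n(K,L)$ is obtained in the paper by exactly your dimension count ($\dim V+\dim \mathbb{F}_1>n$ would force a nonzero vector in $V\cap\mathbb{F}_1$), and the lower bound in both arguments rests on the same two pillars, namely the finiteness of the set of intermediate subfields of a finite simple extension (you call it Steinitz's theorem, the paper invokes Artin's primitive element theorem --- same fact) and the infinitude of $K$. Where you genuinely diverge is the mechanism for producing a $d$-dimensional primitive subspace with $d=n-n(K,L)$. The paper builds it greedily one vector at a time: it repeatedly uses the lemma that a vector space over an infinite field is not a finite union of proper subspaces to pick $a_{j}\in L\setminus\bigcup_i\bigl(\mathbb{F}_i\oplus\langle a_1,\ldots,a_{j-1}\rangle\bigr)$, and the bound $d=n-n(K,L)$ appears as the number of steps for which the enlarged subspaces $\mathbb{F}_i\oplus\langle a_1,\ldots,a_{j-1}\rangle$ remain proper. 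You instead run a one-shot genericity argument: you encode ``$v_1,\ldots,v_d$ independent and $\langle v_1,\ldots,v_d\rangle\cap F_i=\{0\}$'' as the nonvanishing of a maximal minor $P_i$ of a $(d+m_i)\times n$ matrix, certify $P_i\not\equiv 0$ by a basis-extension witness, and evaluate $\prod_iP_i$ at a $K$-rational point. Your route requires slightly more setup (coordinates and minors) but makes the numerology transparent --- the constraint $d+m_i\leq n$ is visibly the exact obstruction --- and it is the cleaner template if one wanted effective or quantitative versions; the paper's greedy construction is more elementary and avoids polynomials entirely, though the two underlying facts about infinite fields (no finite cover by proper subspaces, and nonzero polynomials having nonvanishing $K$-points) are of course close cousins. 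One small point of care your write-up handles implicitly but should state: the family $\{F_i\}$ must include $K$ itself among the proper intermediate subfields, since otherwise a nonzero scalar in $V$ would violate primitivity without lying in any $F_i$.
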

Note that in the proof of the above theorem, we  use the fact that ``finite union of lower-dimensional subfields of number fields (considered as vector spaces over $\mathbb{Q}$) do not cover the field'', namely,  ``a vector space over an infinite base field cannot be written as a finite union of its proper subspaces'' as a key  ingredient.     The  Artin's theorem on primitive elements  which states that ``if $K\subset L$ is a finite field extension and $K$ is infinite, then $K\subset L$ is a simple extension if and only if there are only a finite number of intermediate subfields of $K\subset L$'' is also an engine behind our proof  \cite[Theorem 22.1.22]{10}.
\begin{proof}
Artin's theorem on primitive elements implies that $K\subset L$ has only finitely many intermediate subfields. Assume that $\{\mathbb{F}_i\}_{i=1}^r$ is the family of all proper intermediate subfields of $K\subset L$. Define $\psi=\big\{V\subset L: \, V$ is a $K$-vector subspace of $L$ and $K(x)=L$, for any $x\in V\setminus\{0\}\big\}$. Without loss of generality assume that $n(K,L)=[\mathbb{F}_1:K]$. Choose $V\in\psi$ for which $\dim V=m(K,L)$. It follows from the above mentioned linear algebra result that $L\neq\overset{r}{\underset{i=1}{\bigcup}}\mathbb{F}_i$. Choose $a_1\in L\setminus \overset{r}{\underset{i=1}{\bigcup}}\mathbb{F}_i$ and define $\mathbb{F}_i^{(1)}=\mathbb{F}_i\oplus\langle a_1\rangle$, for $1\leq i\leq r$. So $\{\mathbb{F}_i^{(1)}\}_{i=1}^r$ is a finite family of proper $K$-subspaces of $L$. Next, choose $a_2\in L\setminus \overset{r}{\underset{i=1}{\bigcup}}\mathbb{F}_i^{(1)}$ and define $\mathbb{F}_i^{(2)}=\mathbb{F}_i^{(1)}\oplus\langle a_2\rangle$, for $1\leq i\leq r$. Likewise, we get $L\neq \overset{r}{\underset{i=1}{\bigcup}}\mathbb{F}_i^{(2)}$. Continuing in this manner, we obtain a finite family of $K$-subspaces $\mathbb{F}_i^{(j)}$ of $L$, $1\leq i\leq  r$ and $1\leq j\leq n-[\mathbb{F}_1:K]$. Consider the $K$-subspace $W$ of $L$ spanned by $\left\{a_1,a_2,\ldots,a_{n-[\mathbb{F}_1:K]}\right\}$.  For any $x\in W\setminus\{0\}$, we have $x\not\in \overset{r}{\underset{i=1}{\bigcup}}\mathbb{F}_i$. This follows that $K(x)=L$. Therefore $W\in\psi$. This implies $m(K,L)\geq \dim W= n-[\mathbb{F}_1:K]=n-n(K,L)$. We claim that $m(K,L)=n-n(K,L)$ because otherwise $\dim V>\dim W$ which yields $[\mathbb F_1:K]+\dim V>n$. This follows $\mathbb F_1\cap V\neq\{0\}$ which is a contradiction (note that if $x\in(\mathbb F_1\cap V)$ is nonzero then $x$ cannot be a primitive element of $K\subset L$.) Therefore, $\dim V=\dim W$ and this yields \eqref{eq10}.
\end{proof}

\begin{remark}\label{r4.3}
Note that Theorem \ref{t4.2} is  probably valid even if $K$ is a finite field. To work on the finite base field case, a theorem by Lenstra-Schoof which states that ``for any prime power $q$ and positive integer $m$ there exists a primitive normal basis of $\mathbb{F}_{q^m}$ over $\mathbb{F}_q$" shall be helpful \cite{8}. Further investigations along this line could prove to be worthwhile. 
\end{remark}

\begin{remark}
Note that Theorem \ref{t4.2} has a very similar version in the group setting. Given a cyclic group $G$ of order $n=p^k$, where $p$ is prime and $k\in\mathbb{N} $. Define $n(G)=\max\big\{\#H:\, H$ is a proper subgroup of $G\big\}$ and $m(G)=\#\{a\in G:\, \langle a\rangle=G\}$ . Clearly, $n(G)=p^{k-1}$ and $m(G)=\phi(n)$, where $\phi$ stands for the Euler totient function. Then we have
\begin{align*}
m(G)=\phi(n)=n\left(1-\frac{1}{p}\right)=n-p^{k-1}=n-n(G).
\end{align*}
\end{remark}

\section{Computer Program}
In this section, we employ three algorithms to investigate acyclic matchings in finite cyclic groups. In the first two algorithms, we input $n$ and the expected output is that whether $\mathbb{Z}/n\mathbb{Z}$ has the weak acyclic matching property or not. In the third algorithm, we input $p$, where $p$ is a prime, and the expected output is that whether $\mathbb{Z}/p\mathbb{Z}$ has the acyclic matching property or not. \\

\noindent{\large\textbf{Weak Acyclic Matching Property Algorithm Pseudo Code}}
$$\mathbb{S}=\{ (A,B)\quad : \quad A\cap (A+B)=\emptyset \quad\&\quad \#A=\#B\}$$

\begin{algorithm}
\caption{Checking for Weak Acyclic Matching Property}
\begin{algorithmic}[1]
\Function{WeakAcyclicMatchingPropertyCheck}{$n$}
\For{$(A,B)\in \mathbb{S}$}
\If{NoAcyclicMatchingCheck($(A,B)$)}
\State \Return $(A,B)$
\EndIf
\EndFor
\State \Return $(\emptyset,\emptyset)$

\EndFunction
\end{algorithmic}
\end{algorithm}

Let $\mathbb{B}(A,B)$ be the set of all bijections between $A$ and $B$.

Let $\mathbb{M}$ be a multiset of multiplicity functions.

\begin{algorithm}
\caption{Checking whether there exists no Acyclic Matching for $(A,B)$}

\begin{algorithmic}[1]
\Function{NoAcyclicMatchingCheck}{$(A,B)$}
\For{$b\in \mathbb{B}(A,B)$}
\State $\mathbb{M}=\mathbb{M}\cup m_b$
\EndFor
\If{$\mathbb{M}$ has an element of multiplicity 1}
\State \Return False
\Else
\State \Return True
\EndIf

\EndFunction
\end{algorithmic}
\end{algorithm}
 
 \noindent{\large\textbf{Acyclic Matching Property Algorithm Pseudo Code}}
 \[\mathbb{T}=\{(A,B):\; \#A=\#B\;\&\; 0\not\in B\}\]
 \begin{algorithm}
\caption{Checking for Acyclic Matching Property}
\begin{algorithmic}[1]
\Function{AcyclicMatchingPropertyCheck}{$p$}
\For{$(A,B)\in \mathbb{T}$}
\If{NoAcyclicMatchingCheck($(A,B)$)}
\State \Return $(A,B)$
\EndIf
\EndFor
\State \Return $(\emptyset,\emptyset)$
\EndFunction
\end{algorithmic}
\end{algorithm}
 
\section*{Simulation Results}
Algorithms 1 and 2 show that for all $1<n<23$, $\mathbb{Z}/n\mathbb{Z}$ has the weak acyclic matching property. 
We show the results of algorithm 3 in the following table. In the second column, ``Yes" signifies the existence of the acyclic matching property and otherwise, we use ``No". In case $\mathbb{Z}/p\mathbb{Z}$ does not have the acyclic matching property, in the third column we provide two subsets $A, B$ of $\mathbb{Z}/p\mathbb{Z}$ of the same cardinality and $0\not\in B$ for which there is no acyclic matching from $A$ to $B$. \\
Since the number of subsets to check for the existence of acyclic matchings increases exponentially with $p$, we were not able to run the code beyond $p=19$.

\begin{table}[h!]
\centering
\begin{tabular}{|c|c|c|}
\hline
$p$& Acyclic matching property & $A$ and $B$\\
\hline
2& Yes & -\\
\hline
3& Yes & -\\
\hline
5& Yes & -\\
\hline
\multirow{2}{*}{7}& \multirow{2}{*}{No} & $A=\{0,4,6\}$\\
& & $B=\{3,5,6\}$\\
\hline
\multirow{2}{*}{11}& \multirow{2}{*}{No} & $A=\{0,6,8,9,10\}$\\
& & $B=\{5,7,8,9,10\}$\\
\hline
\multirow{2}{*}{13}& \multirow{2}{*}{No} & $A=\{0,6,8,9,10,11,12\}$\\
& & $B=\{3,5,7,9,10,11,12\}$\\
\hline
\multirow{2}{*}{17}& \multirow{2}{*}{No} & $A=\{0,8,10,11,12,13,14,15,16\}$\\
& & $B=\{3,5,7,9,11,13,14,15,16\}$\\
\hline
\multirow{2}{*}{19}& \multirow{2}{*}{No} & $A=\{0,8,11,12,13,14,15,16,17,18\}$\\
& & $B=\{5,7,11,12,13,14,15,16,17,18\}$\\
\hline
\end{tabular}
\end{table}

\section{A Possible Research Problem}
There exists a generalization of the group concept called {\it $n$-group}. The first formal definition of $n$-groups was given by W. D\"ornte in \cite{2n}. Indeed, an $n$-group is a generalization of a group to a set $G$ with an $n$-ary operation instead of a binary operation. A short review of basic results on $n$-groups can be found in \cite{3n}. Also see \cite{1n}. Following D\"ornte we say that a non-empty set $G$ together with an $n$-ary operation $f:G^n\to G$ is an {\it $n$-groupoid}. We say that this operation is {\it associative} if for all $a_1,a_2,\ldots,a_{2n-1}\in G$ we have 
\begin{align*}
f\left(f(a_1,\ldots,a_n),a_{n+1},\ldots,a_{2n-1}\right)&=f\left(a_1,f(a_2,\ldots,a_{n+1}),a_{n+2},\ldots,a_{2n-1}\right)\\
&\vdots\\
&=f\left(a_1,\ldots,a_{n-1},f(a_n,\ldots,a_{2n-1})\right),
\end{align*}
and in this case $G$ is called an {\it $n$-semigroup}. If for all $a_1,\ldots,a_n$ in $G$ the equations:
\begin{align*}
&f(x_1,a_1,\ldots,a_{n-1})=a_n,\\
&f(a_1,x_2,a_2,\ldots,a_{n-1})=a_n,\\
&\vdots\\
&f(a_1,\ldots,a_{n-1},x_n)=a_n,
\end{align*}
have unique solutions, then $G$ is called an {\it $n$-quasigroup}. We call $G$ an {\it $n$-group} if $G$ is an $n$-semigroup and $n$-quasigroup. We now generalize the concept of matchings in groups to matchings in $n$-groups as follows:
\begin{definition}
Let $G$ be a $2n$-group and $A$ and $B$ be two finite subsets of $G$ with the same cardinality. A bijection $\varphi:A\to B$ is called a matching if for any $(a_1,\ldots,a_n)\in A^n$ we have $f(a_1,\ldots,a_n,\varphi(a_1),\ldots,\varphi(a_n))\not\in A$. In this case we say $A$ is matched to $B$.
\end{definition}
\begin{remark}
If $G$ is a 2-group, a matching in  $G$ is nothing but a matching in the usual sense. We hope that the results presented on matchings in groups have more general applicability, especially in the direction of generalizing these statements  to $2n$-groups. 
\end{remark}
\section*{Acknowledgment}
We are deeply grateful to Prof. Shmuel Friedland for several helpful discussions, useful comments  and reading an earlier version of the present paper. We would also thank the anonymous referee for making several useful comments.

\end{document}